\documentclass[review]{elsarticle}
\bibliographystyle{amsplain}

%%%%%%%%%%%%%%%%%%%%%%%%  Additional Packages %%%%%%%%%%%%%%%%%%%%%%%%%%%%%%%%
\usepackage{bm,amssymb,amsmath}
\usepackage{esint}
\usepackage{marvosym}
\usepackage[all]{xy}
\usepackage[colorlinks]{hyperref}
\usepackage{multirow,subfigure}
\usepackage{graphicx,epstopdf}
\usepackage[letterpaper,margin=1in]{geometry}
\usepackage{multicol}

%%%%%%%%%%%%%%%%%%%%%%%%  My Definitions %%%%%%%%%%%%%%%%%%%%%%%%%%%%%%%%%%%%%
\newcommand{\vertiii}[1]{{\left\vert\kern-0.25ex\left\vert\kern-0.25ex\left\vert #1 
    \right\vert\kern-0.25ex\right\vert\kern-0.25ex\right\vert}}

%%%%%%%%%%%%%%%%%%%%%%%%%%%%%%%%%%%%%%%%%%%%%%%%%%%%%%%%%%%%%%%%%%%%%%%%%%%%%%

\begin{document}

%%%%%%%%%%%%%%%%%%%%%%%%%%%%%%%%%%%%%%%%%%%%%%%%%%%%%%%%%%%%%%%%%%%%%%%%%%%%%%
\newtheorem{remark}{Remark}
\newtheorem{definition}{Definition}
\newtheorem{corollary}{Corollary}
\newtheorem{lemma}{Lemma}
\newtheorem{theorem}{Theorem}
\newtheorem{proof}{Proof}

\numberwithin{lemma}{section}
\numberwithin{theorem}{section}
\numberwithin{remark}{section}
%\numberwithin{definition}{section}
\numberwithin{corollary}{section}
\numberwithin{equation}{section}
%\numberwithin{proof}{section}
\def\proof{\par\noindent{\bf Proof.\ } \ignorespaces}
\def\endproof{}
\begin{frontmatter}

\title{New stabilized $P_1\times P_0$ finite element methods for nearly inviscid and incompressible flows}

%% Group authors per affiliation:
\author[mainaddress1]{Yuwen Li\corref{mycorrespondingauthor}}
\cortext[mycorrespondingauthor]{Corresponding author: yuwenli925@gmail.com (Yuwen Li)}

\author[mainaddress2]{Ludmil T Zikatanov}\ead{ludmil@psu.edu}

\address[mainaddress1,mainaddress2]{Department of Mathematics, The Pennsylvania State
University, University Park, PA 16802, USA}  

\begin{abstract}
This work proposes a new stabilized $P_1\times P_0$ finite element method for solving the incompressible Navier--Stokes equations. The  numerical scheme is based on a reduced Bernardi--Raugel element with statically condensed face bubbles and is pressure-robust in the small viscosity regime. For the Stokes problem, an error estimate  uniform with respect to the kinematic viscosity  is shown. For the Navier--Stokes equation, the nonlinear convection term is discretized using an edge-averaged finite element method. In comparison with classical schemes, the proposed method does not require tunning of parameters and is validated for competitiveness on several benchmark problems in 2 and 3 dimensional space. 
\end{abstract}

%%Research highlights

\begin{keyword}
incompressible Navier--Stokes equation\sep Bernardi--Raugel element\sep stabilized $P_1\times P_0$ element \sep edge-averaged finite element scheme\sep pressure-robust methods 
\end{keyword}

\end{frontmatter}

%%%%%%%%%%%%%%%%%%%%%%%%%%%%%%%% input files %%%%%%%%%%%%%%%%%%%%%%%%%%%%%%%%%
\section{Introduction}

Let $\Omega\subset\mathbb{R}^d$ be a bounded Lipschitz domain with $d\in\{2,3\}$. Let $\bm{u}: \Omega\rightarrow\mathbb{R}^d$ be the velocity field of a fluid occupying $\Omega$ and $p:\Omega\rightarrow\mathbb{R}$ denote its kinematic pressure. The dynamics of the incompressible fluid within  $\Omega$ subject to the loads $\bm{f}\in L^2\big(0,T;[L^2(\Omega)]^d\big)$, $\bm{g}\in L^2\big(0,T;[L^2(\partial\Omega)]^d\big)$ before a time $T>0$ is governed by 
the incompressible Navier--Stokes equation   
\begin{subequations}\label{NSE}
    \begin{align}
    \bm{u}_t-\nu\Delta\bm{u}+\bm{u}\cdot\nabla\bm{u}+\nabla p&=\bm{f}\quad\text{ in }\Omega\times(0,T],\label{NSE1}\\
    \nabla\cdot\bm{u}&=0\quad\text{ in }\Omega\times(0,T],\\
    \bm{u}&=\bm{g}\quad\text{ on }\partial\Omega\times(0,T],\\
    \bm{u}(0)&=\bm{u}_0\quad\text{ in }\Omega,
\end{align}
\end{subequations}
where $\nu>0$ is the kinematic viscosity constant, and $\bm{u}_0\in [L^2(\Omega)]^d$ is the initial velocity. 

Numerical discretization of the velocity-pressure formulation \eqref{NSE} is challenging in several aspects. 
% First $p$ is a Lagrange multiplier arising from the incompressibility constraint $\nabla\cdot\bm{u}=0$. 
To achieve the linear numerical stability, it is essential to choose compatible approaches to discretizing the ${\it velocity\times pressure}$ pair, see, e.g., \cite{BoffiBrezziFortin2013,GiraultRaviart1986,BernardiRaugel1985,CrouzeixRaviart1973,ScottVogelius1985,ArnoldBrezziFortin1985,Zhang2005,GuzmanNeilan2014} for the construction of stable Stokes element pairs and \cite{DiPietroErn2012,CarreroCockburnSchotzau2006,GuzmanShuSequeira2017,ChenLiCorinaCimbala2020,ChenLiDrapacaCimbala2021} for Stokes discontinuous Galerkin methods. 
Second, when the incompressibility constraint $\nabla\cdot\bm{u}=0$ is violated on the discrete level, the performance of Stokes finite elements deteriorates as $\nu$ becomes small. In particular, the $H^1$ velocity error is dominated by the $L^2$ pressure error $\nu^{-1}\|p-p_h\|$. Those finite element discretizations, even though stable, are known as Stokes elements without pressure-robustness. Many classical works are devoted to alleviate or remove the drawback of popular non-divergence-free Stokes elements, see, e.g., the grad-div stabilization technique \cite{FrancaHughes1988,Olshanskii2002,OlshanskiiReusken2004}, postprocessed test functions \cite{LinkeMerdon2016,LinkeMatthiesTobiska2016,JohnLinkeMerdonNeilanRebholz2017}, and pointwise divergence-free Stokes elements \cite{ScottVogelius1985,Zhang2005,GuzmanNeilan2014,GuzmanNeilan2014b}.

Another difficulty in the numerical solution of the Navier--Stokes equation is the presence of the nonlinear convective term $\bm{u}\cdot\nabla\bm{u}$. When $\nu\ll1$, \eqref{NSE} becomes a convection-dominated nonlinear problem. For such problems, it is well known that standard discretization methods inevitably produce numerical solutions with non-physical oscillations. In computational fluid dynamics community, the streamline diffusion \cite{BrooksHughes1981} is a popular  technique    for handling convection-dominated flows. However, it is also known that the streamline diffusion schemes rely on an optimal choice of a parameter which is problem-dependent and ad-hoc application of this numerical technique may lead to over-diffused solutions. For elliptic convection-diffusion problems of the form $-\nabla\cdot(\alpha\nabla u+\beta u)=f$, the edge-averaged finite element (EAFE) \cite{XuZikatanov1999} is an alternative approach to discretizing convection-dominated equations without spurious oscillations in the numerical solutions and is a generalization of the traditional Scharfetter--Gummel scheme \cite{SchafetterGummel1969,BankCoughranCowsar1998} in multi-dimensional space. When compared with the streamline diffusion approach, the EAFE method is a provably monotone scheme satisfying a discrete maximum principle on a wide class of meshes. Recently EAFE has been generalized to higher oder nodal elements and edge and face finite elements, see \cite{BankVassiZikatanov2017,WuXu2020,2020WuZikatanov-a}.

A well-known fact is that the conforming $P_1\times P_0$ finite element approximation, where $P_k$ stands for  piecewise polynomials of degree at most $k$, to ${\it velocity\times pressure}$ pair is not Stokes stable. In this paper, we generalize the stabilized $P_1\times P_0$ element method in \cite{RodrigoHu2018} for the linear Stokes problem to the Navier--Stokes equation \eqref{NSE}. The work \cite{RodrigoHu2018} solves a modified discrete  Stokes system based on the classical Bernardi--Raugel (BR) element \cite{BernardiRaugel1985}. In the solution phase, degrees of freedom (dofs) associated with face bubbles in the BR element are removed in a way similar to static condensation. Because of the nonlinear convection $\bm{u}\cdot\nabla\bm{u}$, it is not clear whether the reduction of face bubbles in \cite{RodrigoHu2018} is applicable to the Navier--Stokes problem \eqref{NSE}. Moreover, the error analysis of the $P_1\times P_0$ scheme in \cite{RodrigoHu2018} has not been present in the literature to date. For that stabilized $P_1\times P_0$ method with slight modification, we shall prove a priori error estimates  uniform with respect to $\nu\ll1$.

In contrast to popular upwind techniques such as the streamline diffusion and upwind finite difference/discontinuous Galerkin discretization schemes, EAFE has not been applied to convection-dominated incompressible flows in the literature. A classical work  relevant to this paper is  \cite{XuYing2001}, where a priori error estimates of EAFE schemes for nonlinear hyperbolic conservation laws are presented. The EAFE bilinear form in \cite{XuZikatanov1999} is determined by nodal values of trial and test functions. As a result, a naive EAFE discretization for $-\nu\Delta\bm{u}+\bm{u}\cdot\nabla\bm{u}$ ignores all stabilizing face bubbles in the BR element and would lead to an unstable discretization for incompressible flows. On the other hand, face bubbles used in trial and test functions for $\bm{u}\cdot\nabla\bm{u}$ may yield a matrix prohibiting application of the bubble reduction technique proposed in \cite{RodrigoHu2018}. In this work, however, we successfully combine the aforementioned reduced BR element and EAFE discretization for the convective term and obtain a new stabilized finite element method for \eqref{NSE} with computational cost dependent on the number of dofs in $[P_1]^d\times P_0$ discretization (see Sections \ref{secOseen} and \ref{secNS} for details). In comparison with classical schemes, it turns out that the method proposed here is quite robust with respect to $\nu$ even when $\nu\ll 1$.
Besides the proposed stabilized scheme, we refer to 
\cite{HughesFrancaBalestra1986,DouglasWang1989} for other stabilized $P_k\times P_{k-1}$ numerical methods for Stokes/Navier--Stokes problems.

The rest of the paper is organized as follows. In Section \ref{secStokes}, we present a robust stabilized $P_1\times P_0$ finite element method for the Stokes problem and the error analysis uniform with respect to $\nu$. In Section \ref{secOseen}, we combine that scheme with EAFE to derive a robust method for the linear Oseen equation. Section \ref{secNS} is devoted to the robust stabilized-$(P_1\times P_0)$-EAFE scheme for the stationary and evolutionary Navier--Stokes equation. In Section \ref{secNE}, the proposed methods are tested in several benchmark problems in two and three spatial dimension. Possible extensions of this work are discussed in Section \ref{seccon}. 

\subsection{Notation}
Let $\mathcal{T}_h$ be a conforming and shape-regular simplicial partition of $\Omega$. Let  $\mathcal{F}_h$ denote the collection of $(d-1)$-dimensional faces in $\mathcal{T}_h,$ $\mathcal{E}_h$ the set of edges in $\mathcal{T}_h$, and $\{z_i\}_{i=1}^N$ the set of grid vertices in $\mathcal{T}_h$. In $\mathbb{R}^2$, the edge set $\mathcal{E}_h$ and face  set $\mathcal{F}_h$ coincide. Given $T\in\mathcal{T}_h,$ let $P_k(T)$ be the space of polynomials on $T$ of degree at most $k$. For each  $F\in\mathcal{F}_h,$ we use $\bm{n}_F$ to denote a unit vector normal to the face $F$.  Let $\lambda_i$ be the hat nodal  basis function at the vertex $z_i$. The face bubble  $\phi_F$ is a function supported on the union of two elements sharing $F$ as a face, that is, $\phi_F=\prod_{\substack{z_i\in\partial F\\
\text{ is a vertex}}}\lambda_i$. 

Let $|E|$, $|F|$, $|T|$ denote the length of $E\in\mathcal{E}_h$, area of $F\in\mathcal{F}_h$, volume of $T\in\mathcal{T}_h$, respectively, and $\fint\bullet  ds=\frac{1}{|E|}\int\bullet ds$ be the average along $E$. 
The mesh size of $\mathcal{T}_h$ is $h:=\max_{T\in\mathcal{T}_h}|T|^\frac{1}{d}$. We may use $C, C_1, C_2,\ldots$ to denote generic constants that are dependent only on the shape-regularity of $\mathcal{T}_h$ and $\Omega.$ The $L^2(\Omega)$ inner product, $L^2(\Omega)$ norm, and $H^k(\Omega)$ semi-norm are denoted by $(\bullet,\bullet)$, $\|\bullet\|$, and $|\bullet|_k$, respectively,   while by $|\bm{X}|$ we denote the Euclidean norm of a vector $\bm{X}$. The notation $A\simeq B$ means that there are constants $C_1$ and $C_2$, independent of mesh size, viscosity and other parameters of interest and such that $A\leq C_1B$ and $B\leq C_2A$. We also need the Sobolev space  $H(\operatorname{div})$ defined as follows:
\begin{equation*}
    H({\rm div},\Omega):=\big\{\bm{v}\in [L^2(\Omega)]^d: \nabla\cdot\bm{v}\in L^2(\Omega)\big\}.
\end{equation*}

\section{Stokes problem}\label{secStokes}
In order to study the incompressibility condition in \eqref{NSE}, we investigate the Stokes problem
\begin{subequations}\label{Stokes}
    \begin{align}
    -\nu\Delta\bm{u}+\nabla p&=\bm{f}\quad\text{ in }\Omega,\\
    \nabla\cdot\bm{u}&=0\quad\text{ in }\Omega,\\
    \bm{u}&=\bm{g}\quad\text{ on }\partial\Omega.
\end{align}
\end{subequations}
Consider the following space
\begin{align*}
    L^2_0(\Omega)&=\left\{q\in L^2(\Omega): \int_\Omega qdx=0\right\}.
\end{align*}
The variational formulation of \eqref{Stokes} is to find $\bm{u}\in [H^1(\Omega)]^d$ with $\bm{u}|_{\partial\Omega}=\bm{g}$ and $p\in L^2_0(\Omega)$ such that
\begin{equation}\label{Stokesvar}
  \begin{aligned}
  \nu (\nabla\bm{u},\nabla\bm{v}) - (\nabla\cdot\bm{v},p) &= (\bm{f},\bm{v}),&&\quad\forall \bm{v}\in [H^1_0(\Omega)]^d,\\
  (\nabla\cdot\bm{u},q) &= 0,&&\quad\forall q\in L^2_0(\Omega).
 \end{aligned}
\end{equation}
The construction of stable finite element subspaces of $[H^1_0(\Omega)]^d\times L^2_0(\Omega)$ was initiated in 1970s and is still under intensive investigation (cf.~\cite{BoffiBrezziFortin2013}). Let 
\begin{align*}
    &\bm{V}_h^l=\big\{\bm{v}_h\in [H^1_0(\Omega)]^d: \bm{v}_h|_T\in [P_1(T)]^d\quad\forall T\in\mathcal{T}_h\big\},\\
    &\bm{V}_h^b=\big\{\bm{v}_h\in [H^1_0(\Omega)]^d: \bm{v}_h|_T\in\text{span}\big\{\phi_F\bm{n}_F\big\}_{F\subset\partial T, F\in\mathcal{F}_h}\quad\forall T\in\mathcal{T}_h\big\}.
\end{align*}
The starting point of our scheme is the Bernardi--Raugel finite element space
\begin{align*}
    &\bm{V}_h:=\bm{V}^l_h\oplus\bm{V}_h^b,\\
    &Q_h:=\{q_h\in L^2_0(\Omega): q_h|_T\in P_0(T)\quad\forall T\in\mathcal{T}_h\},
\end{align*}
which, as shown in \cite{BernardiRaugel1985}, satisfies the inf-sup condition
\begin{equation}\label{infsup}
    \sup_{0\neq\bm{v}_h\in\bm{V}_h}\frac{(\nabla\cdot\bm{v}_h,q_h)}{|\bm{v}_h|_1}\geq\beta\|q_h\|,\quad\forall q_h\in Q_h,
\end{equation}
where $\beta>0$ is an absolute constant dependent on shape regularity of the mesh $\mathcal{T}_h$ and the domain $\Omega$. As a consequence of \eqref{infsup} and the Babu\v{s}ka--Brezzi theory \cite{Babuska1973,Brezzi1974}, the velocity-pressure error of the BR finite element method is first-order convergent under the norm $|\cdot|_1\times\|\cdot\|$. However, due to $\nabla\cdot\bm{V}_h\neq Q_h$, convergence rate of the $H^1$ velocity error and the velocity-pressure error may deteriorate severely when $\nu\to0,$ see \cite{JohnLinkeMerdonNeilanRebholz2017}.

\subsection{Stabilized $P_1\times P_0$ method for Stokes problems}
The approximation power of the BR element is provided by the linear space $\bm{V}_h^l$ while $\bm{V}_h^b$ serves only as a stabilizing component. A disadvantage of the classical BR element is that the number of dofs in $\bm{V}_h^b$ is much larger than the nodal element space $\bm{V}_h^l$. Recently the work \cite{RodrigoHu2018} is able to statically condense out dofs of face bubbles in $\bm{V}_h^b$ and obtain a stabilized $P_1\times P_0$ method for Stokes problems. Given $\bm{v}_h\in\bm{V}_h$, let \begin{equation*}
    \bm{v}_h=\bm{v}_h^l+\bm{v}_h^b,\quad\bm{v}_h^l\in\bm{V}_h^l,\quad\bm{v}_h^b\in\bm{V}_h^b 
\end{equation*} 
be the unique decomposition of $\bm{v}_h.$ We define a bilinear form $a^b_h: \bm{V}^b_h\times\bm{V}^b_h\rightarrow\mathbb{R}$ by 
\begin{equation*}
    a_h^b(\bm{u}_b,\bm{v}_b)=\sum_{F\in\mathcal{F}_h}u_Fv_F\big(\nabla(\phi_F\bm{n}_F),\nabla(\phi_F\bm{n}_F)\big),\quad\forall\bm{u}_b, \bm{v}_b\in\bm{V}_h^b,
\end{equation*}
where $\{u_F\}_{F\in\mathcal{F}_h}, \{v_F\}_{F\in\mathcal{F}_h}$ are coefficients in the unique representations  $\bm{u}_b=\sum_{F\in\mathcal{F}_h}u_F\phi_F\bm{n}_F$, $\bm{v}_b=\sum_{F\in\mathcal{F}_h}v_F\phi_F\bm{n}_F$.
In practice, $a_h^b$ corresponds to the diagonal of the representing matrix for the restricted form $a|_{\bm{V}_b\times\bm{V}_b}$. We then consider
the modified bilinear form $a_h: \bm{V}_h\times\bm{V}_h\rightarrow\mathbb{R}$ given by
\begin{equation}
    a_h(\bm{v}_h,\bm{w}_h):=a_h^b(\bm{v}_h^b,\bm{w}_h^b)+(\nabla\bm{v}_h^b,\nabla\bm{w}_h^l)+(\nabla\bm{v}_h^l,\nabla\bm{w}_h^b)+(\nabla\bm{v}_h^l,\nabla\bm{w}_h^l),\quad\forall\bm{v}_b, \bm{w}_b\in\bm{V}_h^b.
\end{equation}
The modified BR element method in \cite{RodrigoHu2018} is to find $(\bm{u}_h,p_h)\in\bm{V}_h\times Q_h$ such that
\begin{equation}\label{BR}
  \begin{aligned}
  \nu a_h(\bm{u}_h,\bm{v}_h) - (\nabla\cdot\bm{v}_h,p_h) &= (\bm{f},\bm{v}_h),&&\quad\forall \bm{v}_h\in\bm{V}_h,\\
  (\nabla\cdot\bm{u}_h,q_h) &= 0,&&\quad\forall q_h\in Q_h.
 \end{aligned}
\end{equation}
Due to the diagonal bilinear form $a_h^b,$
the dofs associated with faces in \eqref{BR} could be eliminated via a traditional static condensation, see \eqref{matrix}, \eqref{condensed} in Section \ref{secOseen} for more details. Therefore the algebraic solution procedure of \eqref{BR} is equivalent to solving a conforming $[P_1]^d\times P_0$ algebraic linear system.

However, both the original  \cite{BernardiRaugel1985} and modified \eqref{BR} BR finite element method are not robust with respect to exceedingly small $\nu\ll1$. For Stokes elements using  discontinuous  pressures, the works \cite{Linke2014,LinkeMatthiesTobiska2016} obtain pressure-robust methods by interpolating certain test functions into an $H({\rm div})$ finite element space, e.g., the Raviart--Thomas and Brezzi--Douglas--Marini (BDM) spaces (cf.~\cite{BoffiBrezziFortin2013,RaviartThomas1977,BrezziDouglasMarini1985}). Let 
$\Pi_h$ be the canonical interpolation onto the linear BDM space
\begin{equation*}
    \bm{V}_h^{\rm BDM}:=\big\{\bm{v}_h\in H({\rm div},\Omega): \bm{v}_h|_T\in [P_1(T)]^d~\forall T\in\mathcal{T}_h\big\}.
\end{equation*}
Let $P_h$ denote the $L^2$ projection onto the space of piecewise constant functions. It is well known that
\begin{equation}\label{Pihcom}
    \nabla\cdot\Pi_h\bm{v}=P_h\nabla\cdot\bm{v},\quad\forall\bm{v}\in [H^1(\Omega)]^d.
\end{equation}
Following the idea in \cite{Linke2014,LinkeMatthiesTobiska2016}, we modify the right hand side of \eqref{BR} and seek $\bm{u}_h\in\bm{V}_h$, $p_h\in Q_h$ satisfying
\begin{equation}\label{StokesBRbdm}
  \begin{aligned}
  \nu a_h(\bm{u}_h,\bm{v}_h) - (\nabla\cdot\bm{v}_h,p_h) &= (\bm{f},\Pi_h\bm{v}_h),&&\quad\forall \bm{v}_h\in\bm{V}_h,\\
  (\nabla\cdot\bm{u}_h,q_h) &= 0,&&\quad\forall q_h\in Q_h.
 \end{aligned}
\end{equation}
We shall show that \eqref{StokesBRbdm} is uniformly convergent with respect to $\nu\ll1$.
\begin{remark}
Since $\Pi_h$ preserves conforming piecewise linear functions, we have $\Pi_h\bm{v}_h=\bm{v}_h^l+\Pi_h\bm{v}_h^b$ for $\bm{v}_h\in\bm{V}_h$. Consider the following lowest-order Raviart--Thomas space
\begin{equation*}
    \bm{V}_h^{\rm RT}:=\big\{\bm{v}_h\in H({\rm div},\Omega): \bm{v}_h|_T\in P_0(T)\bm{x}+[P_0(T)]^d~\forall T\in\mathcal{T}_h\big\},
\end{equation*}
where $\bm{x}=(x_1,\ldots,x_d)^\top$ is the coordinate vector field. Let $\bm{\phi}_F^{\rm RT}\in\bm{V}_h^{\rm RT}$ be the canonical face basis function of $\bm{V}_h^{\rm RT}$  such that $\int_F\bm{\phi}_F^{\rm RT}\cdot\bm{n}_{F^\prime}dS=\delta_{F,F^\prime}$ for all $F, F^\prime\in\mathcal{F}_h$ with $\delta_{F,F^\prime} $ being the Kronecker delta symbol. Direct calculation shows that
\begin{equation}\label{piphiene}
    \Pi_h(\phi_F\bm{n}_F)=\left\{\begin{aligned}
    \frac{|F|}{6}\bm{\phi}_F^{\rm RT}\quad in\quad \mathbb{R}^2,\\
    \frac{|F|}{15}\bm{\phi}_F^{\rm RT}\quad in\quad \mathbb{R}^3.
    \end{aligned}\right.
\end{equation}
\end{remark}

To illustrate the effectiveness of the new scheme \eqref{StokesBRbdm}, we check the performance of \eqref{StokesBRbdm} and \eqref{BR} applied to the Stokes problem \eqref{Stokes} with $\nu=10^{-3}$ and the exact solution \begin{align*}
    &\bm{u}(\bm{x})=\big(-\sin(\pi x_1)^2\sin(2\pi x_2), \sin(2\pi x_1)\sin(\pi x_2)^2\big)^\top,\\
    &p(\bm{x})=\exp(x_1+x_2)-(\exp(1)-1)^2.
\end{align*}
We set $\Omega=[0,1]\times[0,1]$ to be the unit square and consider the homogeneous Dirichlet boundary condition $\bm{g}=\bm{0}$. The domain $\Omega$ is partitioned into a  $16\times16$ uniform grid of right triangles. The numerical solutions of \eqref{BR} and \eqref{StokesBRbdm} are visualized using the MATLAB function \texttt{quiver} in Fig.~\ref{Stokesplot}. On such a coarse mesh, the velocity by \eqref{StokesBRbdm} is observed to be a good approximation to the sinusoidal solution $\bm{u}$ while the qualitative behavior of the velocity by \eqref{BR} is completely misleading for the small viscosity $\nu$.
\begin{figure}[!hptb]
\centering
\begin{tabular}[c]{cccc}%
  \subfigure[Plot of $\bm{u}_h$ from \eqref{BR}]{\includegraphics[width=6cm,height=6cm]{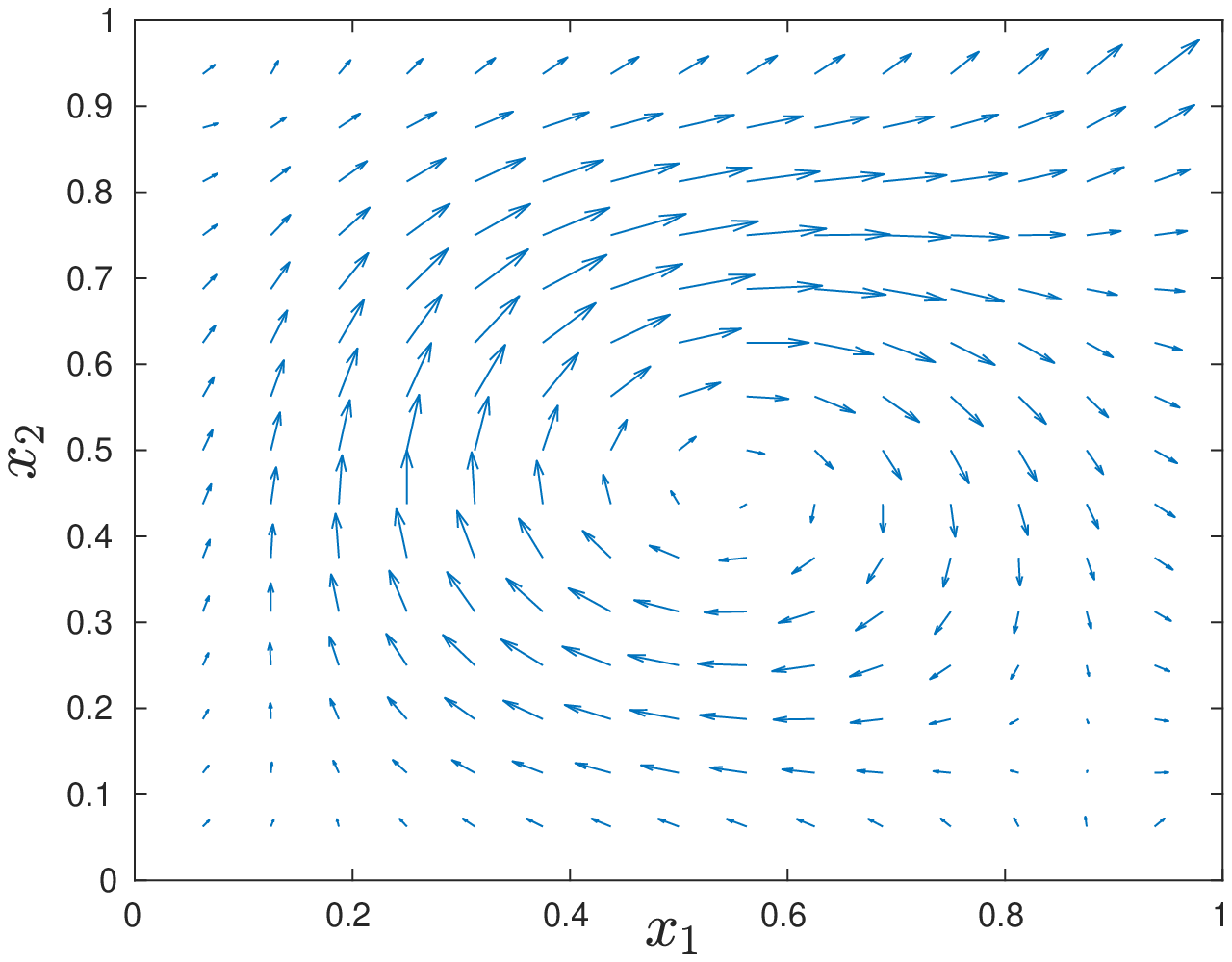}}
  \subfigure[Plot of $\bm{u}_h$ from \eqref{StokesBRbdm}]{\includegraphics[width=6cm,height=6cm]{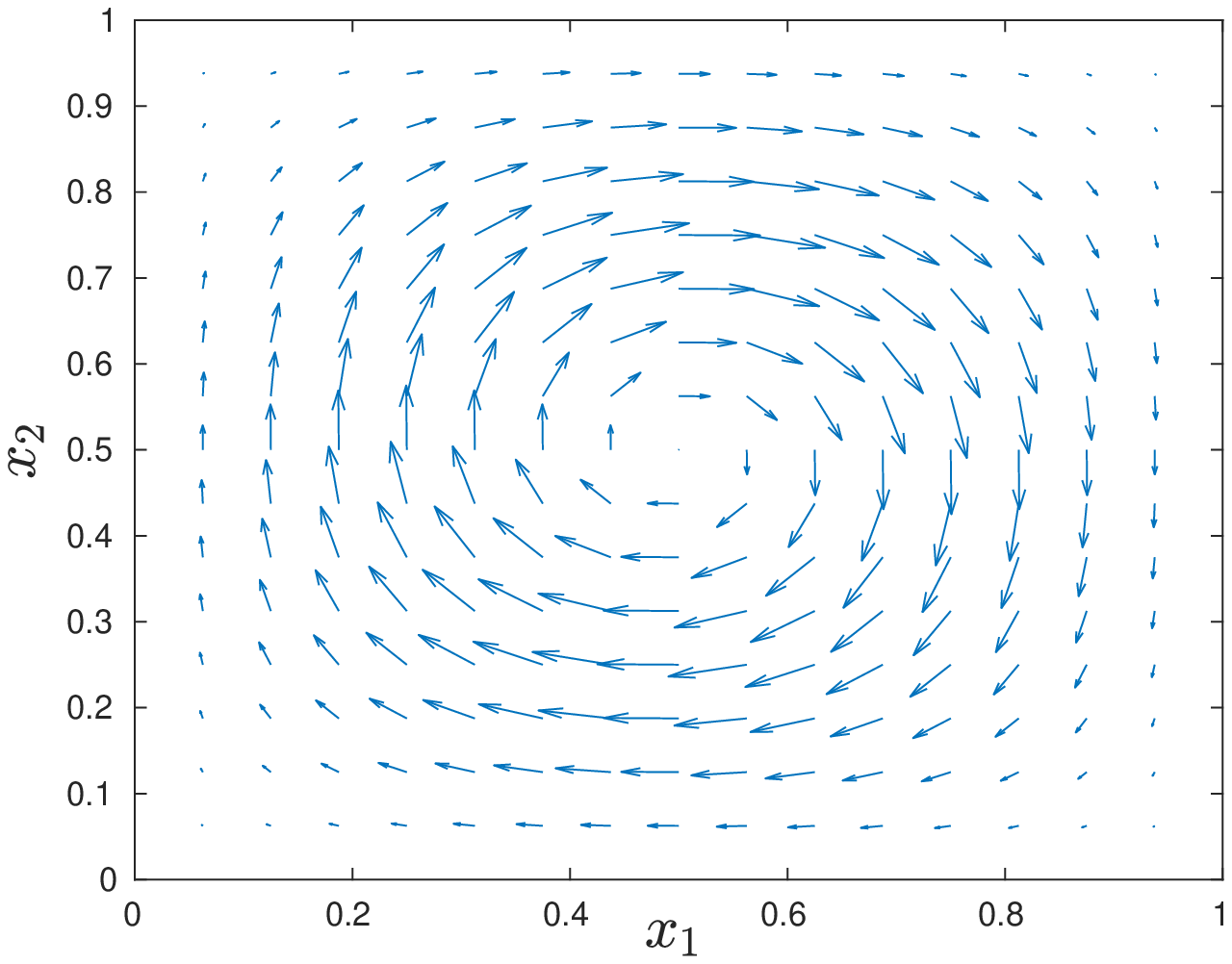}}
\end{tabular}
\caption{Numerical velocity fields  for \eqref{Stokes} with $\nu=10^{-3}$ on a $16\times16$ uniform triangulation of $\Omega=[0,1]^2$.}
\label{Stokesplot}
\end{figure}

\subsection{Convergence analysis}
As we have pointed out earlier, the a priori error analysis of the stabilized $P_1\times P_0$ method \eqref{BR} has not been established in the literature. In this subsection, we go one step further and present a new error estimate for the modified scheme \eqref{StokesBRbdm} that is robust with respect to $\nu$, when $\nu\ll 1$. Our approach is to compare the error in the numerical approximation given by \eqref{StokesBRbdm} with the  error in the following scheme: Find $(\tilde{\bm{u}}_h,\tilde{p}_h)\in\bm{V}_h\times Q_h$ such that
\begin{equation}\label{BRbdm0}
  \begin{aligned}
  \nu (\nabla\tilde{\bm{u}}_h,\nabla\bm{v}_h) - (\nabla\cdot\bm{v}_h,\tilde{p}_h) &= (\bm{f},\Pi_h\bm{v}_h),&&\quad\forall \bm{v}_h\in\bm{V}_h,\\
  (\nabla\cdot\tilde{\bm{u}}_h,q_h) &= 0,&&\quad\forall q_h\in Q_h.
 \end{aligned}
\end{equation}
Following the analysis in \cite{LinkeMatthiesTobiska2016}, it is straightforward to show that
\begin{subequations}\label{BR0est}
\begin{align}
   &|\bm{u}-\tilde{\bm{u}}_h|_1\leq Ch|\bm{u}|_2,\label{BR0u}\\  &\|p-\tilde{p}_h\|\leq Ch\big(\nu|\bm{u}|_2+|p|_1\big).\label{BR0p}
\end{align}
  \end{subequations}

A local homogeneity argument implies
\begin{equation}
    a_h^b(\bm{v}_b,\bm{w}_b)\simeq(\nabla\bm{v}_b,\nabla\bm{w}_b),\quad\forall \bm{v}_b, \bm{w}_b\in\bm{V}_h^b,
\end{equation}
i.e., $(\nabla\bullet,\nabla\bullet)|_{\bm{V}^b_h\times\bm{V}_h^b}$ is spectrally equivalent to $a_h^b.$ As a result, the modified bilinear form $a_h(\bullet,\bullet)$ is coercive 
\begin{equation}\label{ahnorm}
    |\bm{v}|^2_{1,h}:=a_h(\bm{v}_h,\bm{v}_h)\simeq|\bm{v}_h|^2_1,\quad\forall\bm{v}_h\in\bm{V}_h.
\end{equation}

Let $\bm{V}_h^\prime$, $Q_h^\prime$ be the dual space of $\bm{V}_h$, $Q_h$, respectively. Given arbitrary functionals $\bm{F}\in\bm{V}^\prime_h$, $g\in Q^\prime_h$, we consider the following problem: Find  $(\bm{v}^{F,g}_h,q^{F,g}_h)\in\bm{V}_h\times Q_h$ satisfying 
\begin{equation*}
  \begin{aligned}
  \nu a_h(\bm{v}^{F,g}_h,\bm{w}_h) - (\nabla\cdot\bm{w}_h,q^{F,g}_h) &=\bm{F}(\bm{w}_h),&&\quad\forall \bm{w}_h\in\bm{V}_h,\\
  (\nabla\cdot\bm{v}^{F,g}_h,r_h) &= g(r_h),&&\quad\forall r_h\in Q_h.
 \end{aligned}
\end{equation*}
Let $\bm{V}_h$ be equipped with the norm $\nu^\frac{1}{2}|\bullet|_1$ and $Q_h$ use the norm $\nu^{-\frac{1}{2}}\|\bullet\|$. Using the inf-sup condition \eqref{infsup}, the coercivity of $a_h$, and the classical Babu\v{s}ka--Brezzi theory (cf.~\cite{Brezzi1974,XuZikatanov2003}), we obtain the following stability estimate
%\begin{equation}\label{infsup2}
%    \inf_{0\neq(\bm{v}_h,q_h)\in\bm{V}_h\times Q_h}\sup_{0\neq(\bm{w}_h,r_h)\in\bm{V}_h\times Q_h}\frac{\nu a_h(\bm{v}_h,\bm{w}_h)-(\nabla\cdot\bm{w}_h,q_h)-(\nabla\cdot\bm{v}_h,r_h)}{\big(\nu|\bm{v}_h|^2_1+\|q_h\|^2\big)^\frac{1}{2}\big(\nu|\bm{w}_h|^2_1+\|r_h\|^2\big)^\frac{1}{2}}\geq
%\mathcal{K}\left(\frac{\beta}{\sqrt{\nu}}\right),
%\end{equation}
\begin{equation}\label{stability}
    \nu^\frac{1}{2}|\bm{v}^{F,g}_h|_{1,h}+\nu^{-\frac{1}{2}}\|q^{F,g}_h\|\leq\mathcal{K}\left(\beta^{-1}\right)\big(\sup_{\nu^\frac{1}{2}|\bm{w}_h|_1=1}\bm{F}(\bm{w}_h)+\sup_{\nu^{-\frac{1}{2}}\|r_h\|=1}g(r_h)\big),
\end{equation}
where $\mathcal{K}$ is a fixed increasing function. Now we are in a position to present a robust error estimate of \eqref{StokesBRbdm}.
\begin{theorem}\label{BRest}
For $(\bm{u}_h,p_h)$ given in \eqref{StokesBRbdm}, there exist absolute constants $C_u$, $C_p$ independent of $\nu$ and $h$, and such that
    \begin{subequations}
\begin{align}
   &|\bm{u}-\bm{u}_h|_1\leq C_uh|\bm{u}|_2,\label{urobust}\\  &\|p-p_h\|\leq C_ph\big(\nu|\bm{u}|_2+|p|_1\big).\label{probust}
\end{align}
  \end{subequations}
\end{theorem}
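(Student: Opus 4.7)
The plan is to treat the approximation $(\tilde{\bm{u}}_h,\tilde p_h)$ of \eqref{BRbdm0} as a comparison object whose error bounds \eqref{BR0est} are already $\nu$-robust, and then show that the discrete correction $(\bm{u}_h-\tilde{\bm{u}}_h,\,p_h-\tilde p_h)$ stemming from replacing $(\nabla\cdot,\nabla\cdot)$ by $a_h$ is of the same order. Using the triangle inequality, it then suffices to bound $\bm{e}_u:=\bm{u}_h-\tilde{\bm{u}}_h$ and $e_p:=p_h-\tilde p_h$.

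Subtracting \eqref{BRbdm0} from \eqref{StokesBRbdm}, the pair $(\bm{e}_u,e_p)\in\bm{V}_h\times Q_h$ satisfies the discrete saddle point problem
\begin{equation*}
\begin{aligned}
\nu\,a_h(\bm{e}_u,\bm{v}_h)-(\nabla\cdot\bm{v}_h,e_p)&=\bm{F}(\bm{v}_h),&&\quad\forall\bm{v}_h\in\bm{V}_h,\\
(\nabla\cdot\bm{e}_u,q_h)&=0,&&\quad\forall q_h\in Q_h,
\end{aligned}
\end{equation*}
with consistency functional $\bm{F}(\bm{v}_h):=\nu\bigl[(\nabla\tilde{\bm{u}}_h,\nabla\bm{v}_h)-a_h(\tilde{\bm{u}}_h,\bm{v}_h)\bigr]$. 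Since $a_h$ and $(\nabla\cdot,\nabla\cdot)$ agree on linear--linear and cross linear--bubble interactions, this difference reduces to $\nu\bigl[(\nabla\tilde{\bm{u}}_h^b,\nabla\bm{v}_h^b)-a_h^b(\tilde{\bm{u}}_h^b,\bm{v}_h^b)\bigr]$, and by the local spectral equivalence \eqref{ahnorm} this is controlled by $C\nu|\tilde{\bm{u}}_h^b|_1|\bm{v}_h^b|_1$. The task then reduces to bounding the bubble component $|\tilde{\bm{u}}_h^b|_1$.

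The key estimate I would establish is $|\tilde{\bm{u}}_h^b|_1\leq Ch|\bm{u}|_2$. For this I would introduce the canonical Bernardi--Raugel interpolant $\bm{u}_h^I=\bm{u}_h^{I,l}+\bm{u}_h^{I,b}\in\bm{V}_h^l\oplus\bm{V}_h^b$. A direct computation using that $\bm{u}_h^{I,b}$ is the face-bubble correction matching the normal moments of $\bm{u}-\bm{u}_h^{I,l}$ gives $|\bm{u}_h^{I,b}|_1\leq Ch|\bm{u}|_2$ by a scaling argument on each face patch. Combined with the stability of the direct sum decomposition $\bm{V}_h=\bm{V}_h^l\oplus\bm{V}_h^b$ in the $|\cdot|_1$ seminorm (which follows from the locality of the splitting and a reference-element argument), one obtains
\begin{equation*}
|\tilde{\bm{u}}_h^b|_1\leq |(\tilde{\bm{u}}_h-\bm{u}_h^I)^b|_1+|\bm{u}_h^{I,b}|_1\leq C|\tilde{\bm{u}}_h-\bm{u}_h^I|_1+Ch|\bm{u}|_2\leq Ch|\bm{u}|_2,
\end{equation*}
where in the last step I use \eqref{BR0u} and the interpolation error $|\bm{u}-\bm{u}_h^I|_1\leq Ch|\bm{u}|_2$. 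With this in hand, $|\bm{F}(\bm{v}_h)|\leq C\nu h|\bm{u}|_2\,|\bm{v}_h|_1$, so that
\begin{equation*}
\sup_{\nu^{1/2}|\bm{w}_h|_1=1}\bm{F}(\bm{w}_h)\leq C\nu^{1/2}h|\bm{u}|_2.
\end{equation*}

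Feeding this into the Babu\v{s}ka--Brezzi stability estimate \eqref{stability} applied to $(\bm{e}_u,e_p)$ yields $|\bm{e}_u|_1\leq Ch|\bm{u}|_2$ and $\|e_p\|\leq C\nu h|\bm{u}|_2$. Combining these with \eqref{BR0u} and \eqref{BR0p} through the triangle inequality delivers \eqref{urobust} and \eqref{probust}. The main obstacle will be the sharp bound on $|\tilde{\bm{u}}_h^b|_1$ uniformly in $\nu$; naively the bubble part of a Stokes approximation can scale like $\nu^{-1}\|p\|$, but the BDM correction on the right-hand side of \eqref{BRbdm0} (which makes $\tilde{\bm{u}}_h$ behave like a pressure-robust approximation to $\bm{u}$) is precisely what decouples the bubble magnitude from $\nu^{-1}$ and makes the argument go through.
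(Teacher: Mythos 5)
Your argument is correct, but it follows a genuinely different route from the paper's. The paper never forms the error equation for $(\bm{u}_h-\tilde{\bm{u}}_h,\,p_h-\tilde p_h)$ directly: for the velocity it compares both $\bm{u}_h$ and $\tilde{\bm{u}}_h$ against the divergence-free interpolant $\Pi_h\bm{u}$ inside the discretely divergence-free subspace $\bm{W}_h$, where the pressure terms drop and coercivity of $a_h$ gives $|\bm{u}_h-\Pi_h\bm{u}|_1\leq C|\tilde{\bm{u}}_h-\Pi_h\bm{u}|_1$; for the pressure it compares $(\bm{u}_h-\bm{w}_l,\,p_h-r_h)$ with $(\tilde{\bm{u}}_h-\bm{w}_l,\,\tilde p_h-r_h)$ for arbitrary $\bm{w}_l\in\bm{V}_h^l$, $r_h\in Q_h$ and invokes \eqref{stability}. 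The restriction to purely linear comparison functions $\bm{w}_l$ is precisely what makes $a_h$ coincide with the Dirichlet form and kills the consistency term that you instead have to estimate. Your approach isolates that consistency functional, $\bm{F}(\bm{v}_h)=\nu\bigl[(\nabla\tilde{\bm{u}}_h^b,\nabla\bm{v}_h^b)-a_h^b(\tilde{\bm{u}}_h^b,\bm{v}_h^b)\bigr]$, and pays for it with the extra lemma $|\tilde{\bm{u}}_h^b|_1\leq Ch|\bm{u}|_2$, which you justify correctly (Bernardi--Raugel interpolant, $H^1$-stability of the splitting $\bm{V}_h^l\oplus\bm{V}_h^b$, and the pressure-robust bound \eqref{BR0u}); your closing remark that the BDM modification of the right-hand side in \eqref{BRbdm0} is what keeps $|\tilde{\bm{u}}_h^b|_1$ free of $\nu^{-1}$ is exactly the point that makes this lemma available. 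What your route buys is a single Strang-type perturbation argument that yields both estimates at once from \eqref{stability}, and it avoids the paper's slightly delicate manipulation of $a_h(\Pi_h\bm{u},\cdot)$ for a BDM function that is not in $\bm{V}_h$ (nor in $H^1$); what the paper's route buys is that the bubble-smallness lemma is never needed, since the bubble consistency error never appears. Both proofs ultimately rest on the same three pillars: the estimates \eqref{BR0est} for the auxiliary scheme, the spectral equivalence \eqref{ahnorm}, and the Babu\v{s}ka--Brezzi stability \eqref{stability}.
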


\begin{proof}
Consider the space of weakly divergence-free functions \begin{equation*}
    \bm{W}_h:=\big\{\bm{v}_h\in\bm{V}_h: (\nabla\cdot\bm{v}_h,q_h)=0~\forall q_h\in Q_h\big\}.
\end{equation*}
By the definitions of
$(\bm{u}_h,p_h)$ in \eqref{StokesBRbdm} and $(\tilde {\bm{u}}_h,\tilde p_h)$ in \eqref{BRbdm0}, we have for all $\bm{v}\in\bm{V}_h$ and $q\in Q_h$:
\begin{subequations}
  \begin{align}
  a_h(\bm{u}_h,\bm{v}_h)& =
    (\nabla\tilde{\bm{u}}_h,\nabla\bm{v}_h)=0,\quad\forall \bm{v}_h\in \bm{W}_h,\label{ortho1}\\
    \nu a_h(\bm{u}_h,\bm{v}_h) -(\nabla\cdot\bm{v}_h,p_h) & =
    \nu(\nabla\tilde{\bm{u}}_h,\nabla\bm{v}_h) - (\nabla\cdot\bm{v}_h,\tilde{p}_h),\quad\forall \bm{v}_h\in \bm{V}_h,\label{ortho2}\\
  (\nabla\cdot\bm{u}_h,q_h) & = (\nabla\cdot\tilde{\bm{u}}_h,q_h)=0,\quad\forall q_h\in Q_h.\label{ortho3}
 \end{align}
\end{subequations}
It then follows from \eqref{ortho1} and $a_h(\Pi_h\bm{u},\bm{v}_h)=(\nabla\Pi_h\bm{u},\nabla\bm{v}_h)$ that
\begin{equation}\label{uhpihu}
    a_h(\bm{u}_h-\Pi_h\bm{u},\bm{v}_h)=
    \big(\nabla(\tilde{\bm{u}}_h-\Pi_h\bm{u}),\nabla\bm{v}_h\big),\quad\forall \bm{v}_h\in \bm{W}_h.
\end{equation}
Combining \eqref{Pihcom} and $\nabla\cdot\bm{u}=0$, we conclude that $\nabla\cdot\Pi_h\bm{u}=0$ and thus $\Pi_h\bm{u}\in\bm{W}_h$. Therefore
taking $\bm{v}_h=\bm{u}_h-\Pi_h\bm{u}\in\bm{W}_h$ in \eqref{uhpihu} and using \eqref{ahnorm} and the Cauchy--Schwarz inequality lead to
\begin{equation*}
    |\bm{u}_h-\Pi_h\bm{u}|^2_1\simeq a_h(\bm{v}_h,\bm{v}_h)\leq|\tilde{\bm{u}}_h-\Pi_h\bm{u}|_1|\bm{v}_h|_1.
\end{equation*}
As a consequence, it holds that
\begin{equation}\label{term2}
\begin{aligned}
    |\bm{u}_h-\Pi_h\bm{u}|_1&\leq C|\tilde{\bm{u}}_h-\Pi_h\bm{u}|_1\\
    &\leq C|\bm{u}-\tilde{\bm{u}}_h|_1+C|\bm{u}-\Pi_h\bm{u}|_1.
\end{aligned}
\end{equation}
Using \eqref{term2} and a triangle inequality, we have
\begin{equation}\label{lastu}
\begin{aligned}
    |\bm{u}-\bm{u}_h|_1&\leq|\bm{u}-\Pi_h\bm{u}|_1+|\bm{u}_h-\Pi_h\bm{u}|_1\\
    &\leq C|\bm{u}-\Pi_h\bm{u}|_1+C|\bm{u}-\tilde{\bm{u}}_h|_1.
\end{aligned}
\end{equation}
Combining \eqref{lastu} with \eqref{BR0u} and the interpolation error estimate of $\Pi_h$
\begin{equation*}
    |\bm{u}-\Pi_h\bm{u}|_1\leq Ch|\bm{u}|_2
\end{equation*}
finishes the proof of \eqref{urobust}.

Next, we pick arbitrary $\bm{w}_l\in\bm{V}_h^l$ and $r_h \in Q_h$
and use \eqref{ortho2}, \eqref{ortho3} to obtain that
\begin{subequations}\label{compareup}
  \begin{align}
    \nu a_h(\bm{u}_h-\bm{w}_l,\bm{v}_h) -(\nabla\cdot\bm{v}_h,p_h-r_h) & =
    \nu \big(\nabla(\tilde{\bm{u}}_h-\bm{w}_l),\nabla\bm{v}_h\big) - (\nabla\cdot\bm{v}_h,\tilde{p}_h-r_h),\quad\forall \bm{v}_h\in \bm{V}_h,\\
  (\nabla\cdot(\bm{u}_h-\bm{w}_l),q_h) & = (\nabla\cdot(\tilde{\bm{u}}_h-\bm{w}_l),q_h)=0,\quad\forall q_h\in Q_h.
 \end{align}
\end{subequations}
A combination of \eqref{compareup} and the stability estimate \eqref{stability} then implies that for all $\bm{w}_l\in \bm{V}_l$, and $r_h\in Q_h$,
\begin{equation}\label{errprh}
\begin{aligned}
  \nu^\frac{1}{2}|\bm{u}_h-\bm{w}_l|_{1,h}+\nu^{-\frac{1}{2}}\|p_h-r_h\|&\leq\mathcal{K}(\beta^{-1})
  \big(\nu^\frac{1}{2}|\tilde{\bm{u}}_h-\bm{w}_l|_{1,h}+\nu^{-\frac{1}{2}}\|\tilde{p}_h-r_h\|\big)\\
  &\simeq\mathcal{K}(\beta^{-1})
  \big(
  \nu^\frac{1}{2}|\tilde{\bm{u}}_h-\bm{w}_l|_1+\nu^{-\frac{1}{2}}\|\tilde{p}_h-r_h\|\big)\\
  &\leq C\big(\nu^\frac{1}{2}|\bm{u}-\tilde{\bm{u}}_h|_1+\nu^\frac{1}{2}|\bm{u}-\bm{w}_l|_1 + \nu^{-\frac{1}{2}}\|p-\tilde{p}_h\|  +\nu^{-\frac{1}{2}}\|p-r_h\|\big).
\end{aligned}
\end{equation}
Using \eqref{errprh} and the triangle inequality, we have,
\begin{align*}
  \|p-p_h\|
  &\leq\|p-r_h\|+\|p_h-r_h\|\\
  &\leq C\big(\|p-r_h\|+\nu|\bm{u}-\tilde{\bm{u}}_h|_1+\nu|\bm{u}-\bm{w}_l|_1 + \|p-\tilde{p}_h\|\big).
\end{align*}
Finally taking the infimum with respect to $(\bm{w}_l,r_h)\in \bm{V}_h^l\times Q_h$ in the above inequality and using \eqref{urobust} concludes the proof of \eqref{probust}.
\qed\end{proof}

\section{Stationary Oseen equation}\label{secOseen}
We now move on to the discretization of the stationary Navier--Stokes equation
\begin{subequations}\label{SNSE}
    \begin{align}
    -\nu\Delta \bm{u}+\bm{u}\cdot\nabla\bm{u}+\nabla p&=\bm{f}\quad\text{ in }\Omega,\label{SNSE1}\\
    \nabla\cdot\bm{u}&=0\quad\text{ in }\Omega,\\
    \bm{u}&=\bm{g}\quad\text{ on }\partial\Omega,
\end{align}
\end{subequations}
with a potentially very small viscosity $\nu\ll 1$.
As a first step, we consider the linear Oseen equation, which naturally occurs when \eqref{SNSE} is linearized by a fixed point iteration: 
\begin{subequations}\label{Oseen}
    \begin{align}
    -\nu\Delta \bm{u}+\bm{b}\cdot\nabla\bm{u}+\nabla p&=\bm{f}\quad\text{ in }\Omega,\label{Oseen1}\\
    \nabla\cdot\bm{u}&=0\quad\text{ in }\Omega,\\
    \bm{u}&=\bm{g}\quad\text{ on }\partial\Omega,
\end{align}
\end{subequations}
 Here, $\bm{b}: \Omega\rightarrow\mathbb{R}^d$ is a given convective field. For a reason which will become apparent later, we add a viscous term $-\varepsilon\Delta\bm{u}$ with vanishingly small $\varepsilon$,  $0<\varepsilon\ll1$ in \eqref{Oseen1} and obtain the modified momentum equation
 \begin{equation}\label{mmomentum}
     -\nu\Delta \bm{u}-\varepsilon\Delta \bm{u}+\bm{b}\cdot\nabla\bm{u}+\nabla p=\bm{f}.
 \end{equation}
Given a vector-valued $\bm{v}=(v_1,\ldots,v_d)^\top$, we consider the second order flux tensor \begin{equation}\label{EAFEflux}
 \mathbf{J}(\bm{v}):=\varepsilon\nabla\bm{v}+\bm{v}\otimes\bm{b}.
    \end{equation}
The $i$-th row of $\mathbf{J}(\bm{v})=(\bm{J}(v_1)^\top,\ldots,\bm{J}(v_d)^\top)^\top$ then is
the standard flux
\[
\bm{J}(v_i):=\varepsilon\nabla v_i+\bm{b}v_i
\]
used in the classical EAFE discretization \cite{XuZikatanov1999}. 

For each  $E\in\mathcal{E}_h,$ let $\bm{t}_E$ be a fixed unit vector tangent to the edge $E$, and let $\psi_E\in C^1(E)$ satisfy 
    \begin{equation*}
        \partial_{\bm{t}_E}\psi_E=\varepsilon^{-1}\bm{b}\cdot\bm{t}_E,
    \end{equation*}
    where $\partial_{\bm{t}_E}$ is the directional derivative along $\bm{t}_E.$ We remark that $\psi_E$ is determined up to a constant.
The following lemma could be found in \cite{XuZikatanov1999}.
\begin{lemma}\label{EAFElemma}
Let $\bm{\tau}_E=|E|\bm{t}_E$ be a non-unit vector tangent to $E\in\mathcal{E}_h$. We have
\begin{equation}\label{EAFE2}
    \delta_E(e^{\psi_E}v)=\fint_E\varepsilon^{-1}e^{\psi_E}\bm{J}(\bm{v})\cdot\bm{\tau}_E ds.
\end{equation}
For a piecewise constant $\bm{J}_h$ vector field and a continuous and piecewise affine $w_h$ with respect to $\mathcal{T}_h$, we have
\begin{equation}\label{EAFE1}
    (\nabla w_h,\bm{J}_h)=-\sum_{E\in\mathcal{E}_h}a_E\delta_Ew_h\bm{J}_h\cdot\bm{\tau}_E,
\end{equation}
where $\delta_E \chi= \chi(x_j)-\chi(x_i)$ with $E$ being the edge having endpoints $x_i$, $x_j$, $\bm{\tau}_E$ points from $x_i$ to $x_j$, and $$a_E=a_{ij}:=\int_\Omega\nabla\lambda_i\cdot\nabla\lambda_jdx$$ is the $(i,j)$-entry of $P_1$ element stiffness matrix.
\end{lemma}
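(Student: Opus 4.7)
My plan is to establish the two assertions \eqref{EAFE2} and \eqref{EAFE1} in turn, both by reducing to one-line calculus facts.

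For \eqref{EAFE2}, the whole content is the defining relation $\partial_{\bm{t}_E}\psi_E = \varepsilon^{-1}\bm{b}\cdot\bm{t}_E$ combined with the product rule along $\bm{t}_E$. First I would compute
\begin{equation*}
\partial_{\bm{t}_E}\bigl(e^{\psi_E}v\bigr) = e^{\psi_E}\bigl(\partial_{\bm{t}_E}v + v\,\partial_{\bm{t}_E}\psi_E\bigr) = \varepsilon^{-1}e^{\psi_E}\bigl(\varepsilon\nabla v + \bm{b}v\bigr)\cdot\bm{t}_E = \varepsilon^{-1}e^{\psi_E}\bm{J}(v)\cdot\bm{t}_E .
\end{equation*}
Integrating this identity along $E$ and applying the fundamental theorem of calculus on the left gives $\delta_E(e^{\psi_E}v)$, while using $\bm{\tau}_E = |E|\bm{t}_E$ together with the definition of $\fint_E$ converts the right-hand side into the required edge-average.

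For \eqref{EAFE1} the key preliminary fact is the scalar identity
\begin{equation*}
(\nabla u_h,\nabla v_h) = -\sum_{E\in\mathcal{E}_h} a_E\,\delta_E u_h\,\delta_E v_h, \qquad u_h,v_h\text{ continuous piecewise affine},
\end{equation*}
which I would prove by expanding $(\nabla u_h,\nabla v_h) = \sum_{i,j}u_i v_j a_{ij}$, invoking the row-sum property $\sum_j a_{ij} = 0$ (a consequence of $\sum_i \lambda_i \equiv 1$) to rewrite the diagonal contributions, and symmetrizing using $a_{ij}=a_{ji}$. The same manipulation works element-by-element with the local stiffness entries $a_E^T := \int_T\nabla\lambda_i\cdot\nabla\lambda_j\,dx$, since $\sum_j a_{ij}^T = 0$ on any single $T$. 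Now given a piecewise constant $\bm{J}_h$, on each $T$ the vector $\bm{J}_h|_T$ is constant, hence the gradient of the affine function $\phi_T(x) := \bm{J}_h|_T\cdot x$; by definition $\delta_E\phi_T = \bm{J}_h|_T\cdot\bm{\tau}_E$. Applying the local scalar identity with $v_h|_T = \phi_T$ yields
\begin{equation*}
\int_T \nabla w_h \cdot \bm{J}_h\,dx = -\sum_{E\subset T} a_E^T\,\delta_E w_h\,\bm{J}_h|_T\cdot\bm{\tau}_E,
\end{equation*}
and summing over $T\in\mathcal{T}_h$ while collecting $a_E = \sum_{T\supset E}a_E^T$ produces the stated formula.

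Neither half presents a real obstacle; the only mild subtlety is notational bookkeeping in the second step, namely that when $\bm{J}_h$ jumps across an interior face the symbol $\bm{J}_h\cdot\bm{\tau}_E$ in the final sum should be read as the pair of one-sided values paired against their respective $a_E^T$ weights. This matches the stated identity verbatim whenever $\bm{J}_h\cdot\bm{t}_E$ happens to be single-valued on $E$, which is the setting in which \eqref{EAFE1} will later be applied to build the EAFE bilinear form. Beyond this, everything relies only on the product rule, the fundamental theorem of calculus, and the elementary identity $\sum_j a_{ij} = 0$ for the $P_1$ stiffness matrix.
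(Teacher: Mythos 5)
Your argument is correct. Note that the paper itself offers no proof of this lemma, deferring entirely to the citation of Xu--Zikatanov (1999); your write-up reconstructs exactly the standard argument from that reference: the product rule plus the defining relation $\partial_{\bm{t}_E}\psi_E=\varepsilon^{-1}\bm{b}\cdot\bm{t}_E$ followed by the fundamental theorem of calculus along $E$ for \eqref{EAFE2}, and the zero-row-sum property $\sum_j a_{ij}=0$ with symmetrization $a_{ij}=a_{ji}$ for the discrete Green-type identity underlying \eqref{EAFE1}. You are also right to flag the one genuine subtlety in the statement as written: for a genuinely discontinuous piecewise constant $\bm{J}_h$ the quantity $\bm{J}_h\cdot\bm{\tau}_E$ is multivalued on interior edges, and the identity must be read element-by-element with the local weights $a_E^T$ (which is how it is used later in the paper, where the edge value is replaced by a single harmonic-type average anyway); this matches the element-wise formulation in the original reference.
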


For $\bm{v}=(v_1,\ldots,v_d)^\top\in\bm{V}_h^l$, let $\mathbf{J}_h=(\bm{J}_h^{1\top},\ldots,\bm{J}_h^{d\top})^\top=P_h\mathbf{J}(\bm{v})$ be the piecewise constant $L^2$ projection of $\mathbf{J}(\bm{v})$ and thus $\bm{J}_h^i\approx\bm{J}(v_i)$.
It follows from \eqref{EAFE2} with $v=v_i$ that
\begin{equation*}
    \bm{J}_h^i\cdot\bm{\tau}_E\approx\varepsilon \left(\fint_Ee^{\psi_E}ds\right)^{-1}\delta_E(e^{\psi_E}v_i).
\end{equation*}
Using this fact, integration by parts and \eqref{EAFE1}, for a piecewise linear finite element  approximation $\bm{u}_h^l=(\bm{u}_{h,1}^l,\ldots,\bm{u}_{h,d}^l)^\top$ to $\bm{u}$, we have
\begin{equation}
    \begin{aligned}
    &(-\varepsilon\Delta \bm{u}+\bm{b}\cdot\nabla\bm{u},\bm{v})=(\nabla\bm{u},\varepsilon\nabla\bm{v}+\bm{v}\otimes\bm{b})\\
    &=(\nabla\bm{u},\mathbf{J}(\bm{v}))\approx(\nabla\bm{u}_h^l,\mathbf{J}_h)=-\sum_{i=1}^d\sum_{E\in\mathcal{E}_h}a_E\delta_Eu_{h,i}^l\bm{J}^i_h\cdot\bm{\tau}_E\\
    &\approx-\sum_{i=1}^d\sum_{E\in\mathcal{E}_h}\varepsilon a_E\left(\fint_Ee^{\psi_E}ds\right)^{-1}\delta_Eu_{h,i}^l\delta_E(e^{\psi_E}v_i)\\
    &=-\sum_{E\in\mathcal{E}_h}\varepsilon a_E\left(\fint_Ee^{\psi_E}ds\right)^{-1}\delta_E\bm{u}_h^l\cdot\delta_E(e^{\psi_E}\bm{v}).
    \end{aligned}
\end{equation}
As a consequence, we define the EAFE bilinear form $b^{\rm EAFE}_h: \bm{V}_h^l\times \bm{V}_h^l\rightarrow\mathbb{R}$ by
\begin{equation*}
        b^{\rm EAFE}_h(\bm{v}^l_h,\bm{w}^l_h;\bm{b}):=-\sum_{E\in\mathcal{E}_h}\varepsilon a_E\left(\fint_Ee^{\psi_E}ds\right)^{-1}\delta_E\bm{v}^l_h\cdot\delta_E(e^{\psi_E}\bm{w}^l_h),\quad\forall\bm{v}_h^l, \bm{w}_h^l\in\bm{V}_h^l,
\end{equation*}
which is an variational form for discretizing $-\varepsilon\Delta \bm{u}+\bm{b}\cdot\nabla\bm{u}\approx\bm{b}\cdot\nabla\bm{u}$. We remark that the value of $b^{\rm EAFE}_h$ is not affected by the generic additive constant in $\psi_E$. 

\begin{remark}
The original EAFE method \cite{XuZikatanov1999} is designed for the convection-diffusion equation $-\nabla\cdot(\alpha\nabla u+\beta u)=f$ in conservative form. On the other hand, the EAFE bilinear form $b_h^{\rm EAFE}$ works for $-\nabla\cdot(\alpha\nabla u)+\beta\cdot\nabla u=f$ in convective form. In fact, $b_h^{\rm EAFE}$ depends on test functions weighted by edge-wise exponential functions while the classical EAFE scheme \cite{XuZikatanov1999} makes use of an exponentially averaged trial function.
\end{remark}

Using the modified equation \eqref{mmomentum},  and the approximations $a(\bm{u},\bm{v}_h)\approx a_h(\bm{u}_h,\bm{v}_h)$ and $(\bm{b}\cdot\nabla\bm{u},\bm{v}_h^l)\approx b^{\rm EAFE}_h(\bm{u}_h^l,\bm{v}^l_h;\bm{b})$, we obtain a stabilized  $P_1\times P_0$ EAFE scheme for the problem \eqref{Oseen}: Find $(\bm{u}_h,p_h)\in\bm{V}_h\times Q_h$ such that
\begin{equation}\label{BREAFE}
  \begin{aligned}
  \nu a_h(\bm{u}_h,\bm{v}_h) + b^{\rm EAFE}_h(\bm{u}^l_h,\bm{v}^l_h;\bm{b})- (\nabla\cdot\bm{v}_h,p_h) &= (\bm{f},\Pi_h\bm{v}_h),&&\quad\forall \bm{v}_h\in\bm{V}_h,\\
  (\nabla\cdot\bm{u}_h,q_h) &= 0,&&\quad\forall q_h\in Q_h.
 \end{aligned}
\end{equation}
Here $b^{\rm EAFE}_h(\bm{v}_h,\bm{w}_h)$ is also defined for any continuous functions $\bm{v}_h$ and $\bm{w}_h$. However, it is easy to see that the value of $b^{\rm EAFE}_h$  is determined by nodal values of trial and test functions, i.e., 
$b^{\rm EAFE}_h(\bm{v}_h,\bm{w}_h;\bm{b})=b^{\rm EAFE}_h(\bm{v}^l_h,\bm{w}^l_h;\bm{b})$ for $\bm{v}_h, \bm{w}_h\in\bm{V}_h$. 
Thus, the stabilizing face bubble functions in the BR element do not enter the discretized EAFE convective term.

Unfortunately, as numerical experiments show, the EAFE discretization given above for $\bm{b}\cdot\nabla\bm{u}$ does not work well. We now turn  to the pressure-robust finite elements using discontinuous pressures \cite{Linke2014,LinkeMatthiesTobiska2016,LinkeMerdon2016}. Those works  apply Raviart--Thomas or BDM interpolation to the test functions for the convective term $\bm{b}\cdot\nabla\bm{u}$. Similarly, we add a stabilization term with  postprocessed bubble test functions to $b_h^{\rm EAFE}$  and obtain the following discrete convective form
    \begin{equation*}
    \begin{aligned}
            &b_h(\bm{v}_h,\bm{w}_h;\bm{b})=b_h(\bm{v}^l_h,\bm{w}_h;\bm{b})\\
            &\quad:=b_h^{\rm EAFE}(\bm{v}^l_h,\bm{w}^l_h;\bm{b})+(\bm{b}\cdot\nabla\bm{v}_h^l,\Pi_h\bm{w}_h^b),\quad\forall\bm{v}_h, \bm{w}_h\in\bm{V}_h.
    \end{aligned}
    \end{equation*}
    The resulting scheme for \eqref{Oseen} is: Find $(\bm{u}_h,p_h)\in\bm{Q}_h\times V_h$ such that
\begin{equation}\label{OseenBRbdmEAFE}
  \begin{aligned}
  \nu a_h(\bm{u}_h,\bm{v}_h) + b_h(\bm{u}^l_h,\bm{v}_h;\bm{b})- (\nabla\cdot\bm{v}_h,p_h) &= (\bm{f},\Pi_h\bm{v}_h),\quad\forall \bm{v}_h\in\bm{V}_h,\\
  (\nabla\cdot\bm{u}_h,q_h) &= 0,\quad\forall q_h\in Q_h.
 \end{aligned}
\end{equation}
In \eqref{OseenBRbdmEAFE}, let $\mathbf{A}_{bb}$, $\mathbf{A}_{ll}$, $\mathbf{A}_{bl}$, $\mathbf{A}_{lb}$, $\mathbf{A}_{bp}$, $\mathbf{A}_{lp}$ be the  matrices representing the bilinear forms $\nu a_h(\bm{u}_h^b,\bm{v}_h^b)$, $\nu a_h(\bm{u}_h^l,\bm{v}_h^l)+b_h(\bm{u}_h^l,\bm{v}_h^l)$, $\nu a_h(\bm{u}_h^l,\bm{v}_h^b)+(\bm{b}\cdot\nabla\bm{u}_h^l,\Pi_h\bm{v}_h^b)$, $\nu a_h(\bm{u}_h^b,\bm{v}_h^l)$,  $-(\nabla\cdot\bm{v}_h^b,p_h)$, $-(\nabla\cdot\bm{v}_h^l,p_h)$, respectively. Furthermore, let $\bm{U}_b$, $\bm{U}_l$, $\bm{P}$, $\bm{F}_b$, $\bm{F}_l$ be the vector representation for $\bm{u}_h^b$, $\bm{u}_h^l$, $p_h$,  $(\bm{f},\Pi_h\bm{v}_h^b)$ and $(\bm{f},\bm{v}_h^l)$ (here $\Pi_h\bm{v}_h^l=\bm{v}_h^l$), respectively. The algebraic linear system for \eqref{OseenBRbdmEAFE} then reads
\begin{equation}\label{matrix}
    \begin{pmatrix}
    \mathbf{A}_{bb}&\mathbf{A}_{bl}&\mathbf{A}_{bp}\\
    \mathbf{A}_{lb}&\mathbf{A}_{ll}&\mathbf{A}_{lp}\\
    \mathbf{A}_{bp}^\top&\mathbf{A}_{lp}^\top&\mathbf{O}
    \end{pmatrix}\begin{pmatrix}
    \bm{U}_b\\\bm{U}_l\\\bm{P}
    \end{pmatrix}=\begin{pmatrix}
    \bm{F}_b\\\bm{F}_l\\\bm{0}
    \end{pmatrix}.
\end{equation}
Using the block Gaussian elimination, i.e. the static condensation, we obtain the reduced system
\begin{equation}\label{condensed}
    \begin{pmatrix}
    \mathbf{A}_{ll}-\mathbf{A}_{lb}\mathbf{A}^{-1}_{bb}\mathbf{A}_{bl}&\mathbf{A}_{lp}-\mathbf{A}_{lb}\mathbf{A}^{-1}_{bb}\mathbf{A}_{bp}\\
    \mathbf{A}_{lp}^\top-\mathbf{A}_{bp}^\top\mathbf{A}^{-1}_{bb}\mathbf{A}_{bl}&-\mathbf{A}_{bp}^\top\mathbf{A}^{-1}_{bb}\mathbf{A}_{bp}
    \end{pmatrix}\begin{pmatrix}
    \bm{U}_l\\\bm{P}
    \end{pmatrix}=\begin{pmatrix}
    \bm{F}_l-\mathbf{A}_{lb}\mathbf{A}_{bb}^{-1}\bm{F}_b\\-\mathbf{A}_{bp}^\top\mathbf{A}^{-1}_{bb}
    \end{pmatrix}.
\end{equation}
Let $N_v$ denote the number of interior vertices of $\mathcal{T}_h$, and $N_t$ the number of elements in $\mathcal{T}_h$.
Since $\mathbf{A}_{bb}$ is a diagonal matrix, the stiffness matrix in \eqref{condensed} is sparse of dimension  $(dN_v+N_t)\times(dN_v+N_t)$, the same as a $[P_1]^d\times P_0$ element. The vector $\bm{U}_b$ of dofs  for the bubble component  $\bm{u}^b_h$ are then recovered by $\bm{U}_b=\mathbf{A}_{bb}^{-1}(\bm{F}_b-\mathbf{A}_{bl}\bm{U}_l-\mathbf{A}_{bp}\bm{P})$.

\begin{remark}
We remark that the term $(\bm{b}\cdot\nabla\bm{u}_h^b,\Pi_h\bm{v}_h^b)$ is not added to \eqref{OseenBRbdmEAFE}. Otherwise the block $\mathbf{A}_{bb}$ is not diagonal  and the above bubble reduction technique does not hold.
\end{remark}

Let $b_{h,T}$ be the restriction of $b_h$ on $T\in\mathcal{T}_h$ and $E\in\mathcal{E}_h$ be an edge having endpoints $x_i$ and $x_j.$
When $\bm{b}$ is piecewise constant with respect to $\mathcal{T}_h$, the $(i,j)$-entry $b_{h,T}(\lambda_j,\lambda_i)$ of the local stiffness matrix for $b_{h,T}|_{\bm{V}_h^l\times\bm{V}_h^l}$ with $i\neq j$ is simple and written as
\begin{equation}\label{localbh}
    b_{h,T}(\lambda_j,\lambda_i)=\left\{
    \begin{aligned}
    &\varepsilon a_{ij}B(\varepsilon^{-1}\bm{b}|_T\cdot\bm{\tau}_E),\quad \bm{\tau}_E\text{ points from }x_i\text{ to }x_j,\\
    &\varepsilon a_{ij}B(-\varepsilon^{-1}\bm{b}|_T\cdot\bm{\tau}_E),\quad \bm{\tau}_E\text{ points from }x_j\text{ to }x_i,
    \end{aligned}\right.
\end{equation}
where $B$ is the Bernoulli function \begin{equation}
   B(s)=\left\{\begin{aligned}
   &\frac{s}{e^s-1},\quad &&s\neq0,\\
   &1,\quad &&s=0.
   \end{aligned}\right.
\end{equation}
For $i=j$, we have $b_{h,T}(\lambda_i,\lambda_i)=-\sum_{j\neq i}b_{h,T}(\lambda_i,\lambda_j).$
Therefore computing $b_h|_{\bm{V}_h^l\times\bm{V}_h^l}$ is no more complicated than assembling the stiffness matrix $(a_{ij})_{1\leq i,j\leq N}$ of the $P_1$ finite element method for Poisson's equation.

\begin{figure}[ptb]
\centering
\begin{tabular}[c]{cccc}%
  \subfigure[Plot of $|\bm{u}_h|$ from \eqref{OseenBRbdmEAFE}]{\includegraphics[width=6cm,height=6cm]{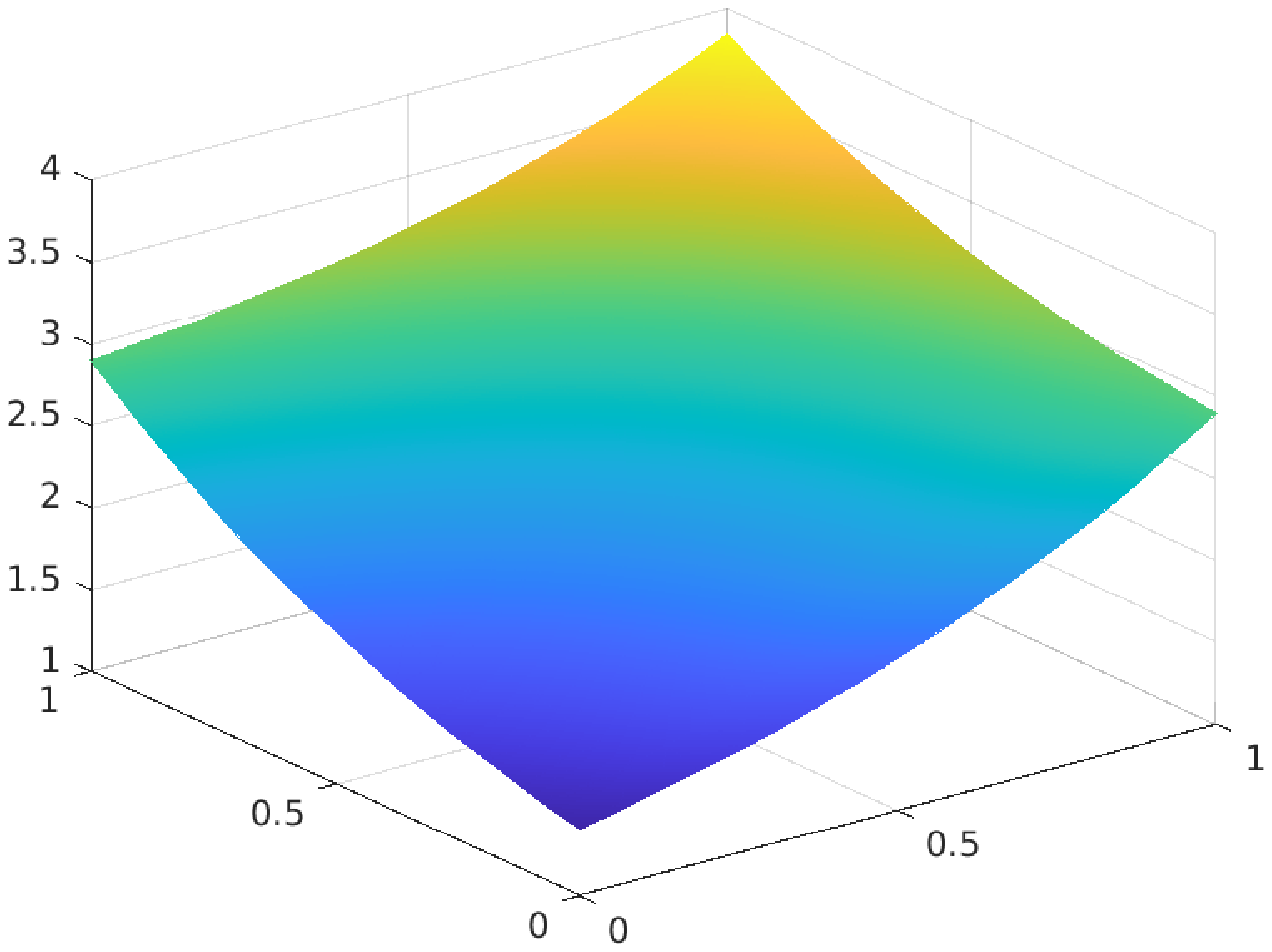}}
  \subfigure[Plot of $|\bm{u}_h|$ from \eqref{BREAFE}]{\includegraphics[width=6cm,height=6cm]{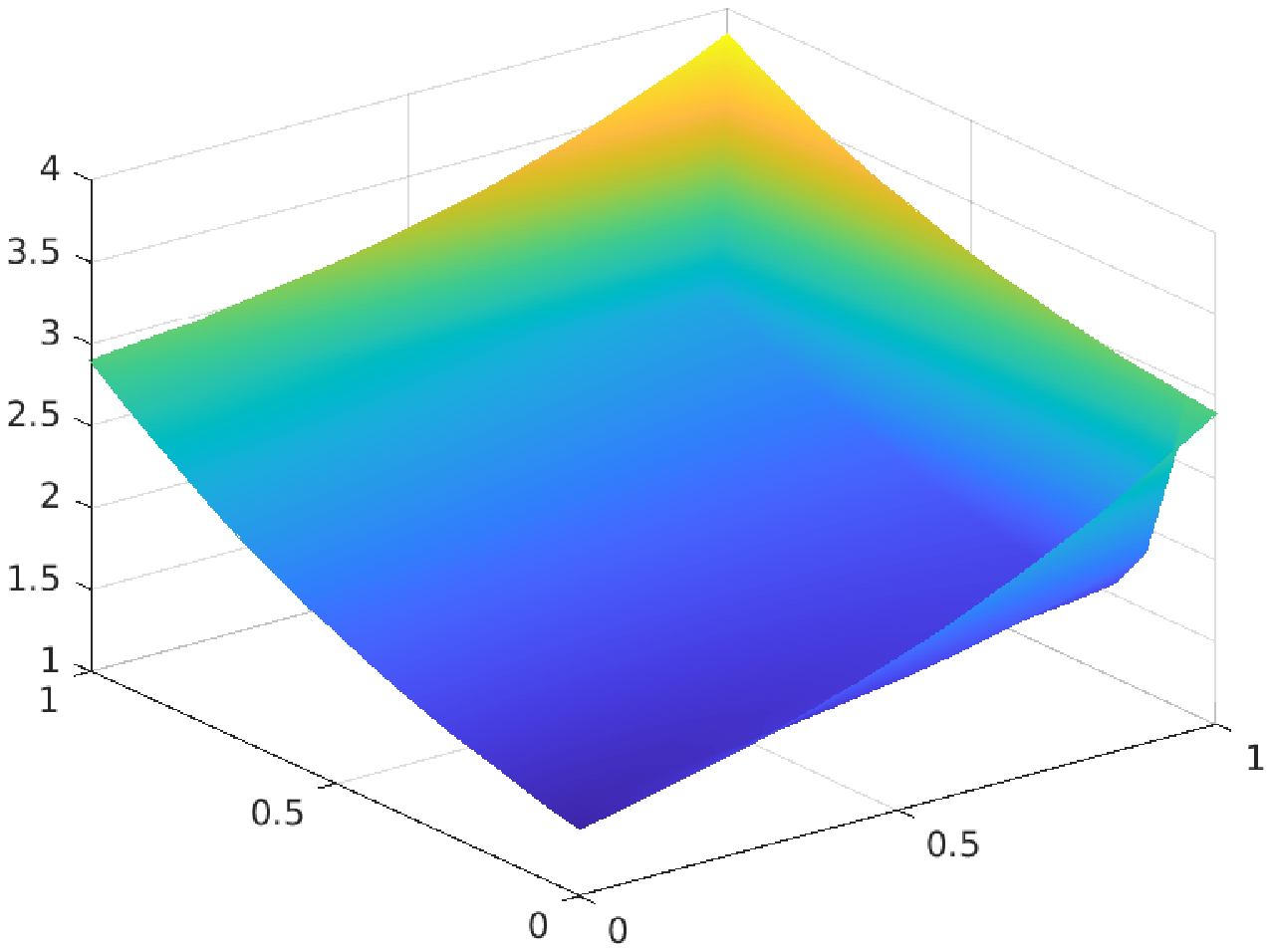}}
\end{tabular}
\caption{Numerical speeds for \eqref{Oseen} with $\nu=10^{-4}$, $\bm{b}=(10,1)^\top$ on a $16\times16$ uniform triangulation of $\Omega=[0,1]^2$.}
\label{Oseenplot}
\end{figure}

Now we compare the stabilized EAFE scheme \eqref{OseenBRbdmEAFE} with the unstabilized one \eqref{BREAFE}, where $\varepsilon=10^{-8}$.
Consider the Oseen problem \eqref{Oseen} with $\nu=10^{-4}$, $\bm{b}=(10,1)^\top$ and the exact solution \begin{align*}
    &\bm{u}(\bm{x})=\big(\exp(x_2), \exp(x_1)\big)^\top,\\
    &p(\bm{x})=\exp(x_1+x_2)-(\exp(1)-1)^2.
\end{align*}
We set $\Omega=[0,1]\times[0,1]$ to be the unit square. The domain $\Omega$ is partitioned into a  $16\times16$ uniform grid of right triangles. Numerical solutions are visualized in Fig.~\ref{Oseenplot}. In this example, the speed profile from \eqref{OseenBRbdmEAFE} is similar to the exponential exact speed $|\bm{u}|=\sqrt{\exp(2x_1)+\exp(2x_2)}$ while $|\bm{u}_h|$ from \eqref{BREAFE} exhibits an anomalous boundary layer.

\begin{remark}
For the convection-dominated elliptic problem $-\nabla \cdot(\alpha \nabla u+\bm{\beta} u)=f$, the classical EAFE scheme \cite{XuZikatanov1999} makes use of $-\alpha\Delta u$ without adding an artificial viscous term $-\varepsilon\Delta u$. Similarly, for nearly inviscid incompressible flows \eqref{Oseen} with $\nu\ll1,$
it is tempting to use $-\nu\Delta\bm{u}$ instead of adding the vanishing viscosity term $-\varepsilon\Delta\bm{u}$ when deriving EAFE schemes \eqref{OseenBRbdmEAFE}, \eqref{NSBRbdmEAFE}. However, an EAFE discretization for $-\nu\Delta\bm{u}+\bm{b}\cdot\nabla\bm{u}$ will not be Stokes stable because EAFE bilinear form does not take  face bubbles into account. 
\end{remark}

\begin{remark}
The EAFE scheme \eqref{OseenBRbdmEAFE} depends on the artificial diffusion constant $\varepsilon\ll1.$ In practice, there is no need to tune this parameter once $\varepsilon$ is sufficiently small. For example, schemes using $\varepsilon=10^{-8}$ or $\varepsilon=10^{-10}$ have almost the same performance for benchmark problems. 
\end{remark}

\section{Navier--Stokes equation}\label{secNS}
In view of the linear scheme \eqref{OseenBRbdmEAFE}, our method for the stationary Navier--Stokes equation \eqref{SNSE} seeks $(\bm{u}_h,p_h)\in\bm{V}_h\times Q_h$ satisfying
\begin{equation}\label{NSBRbdmEAFE}
  \begin{aligned}
  \nu a_h(\bm{u}_h,\bm{v}_h) + b_h(\bm{u}^l_h,\bm{v}_h;P_h\bm{u}^l_h)- (\nabla\cdot\bm{v}_h,p_h) &= (\bm{f},\Pi_h\bm{v}_h),\quad&&\forall~\bm{v}_h\in\bm{V}_h,\\
  (\nabla\cdot\bm{u}_h,q_h) &= 0,\quad&&\forall~q_h\in Q_h.
 \end{aligned}
\end{equation}
We recall that $P_h\bm{u}^l_h$ is the element-wise average of the linear component $\bm{u}^l_h$ given by $P_h\bm{u}^l_h|_T=\frac{1}{|T|}\int_T\bm{u}^l_hdx$. In practice, we use the fixed-point iteration to solve \eqref{NSBRbdmEAFE} by iterating the third argument in $b_h.$ At each iteration step, the linearized finite element scheme reduces to \eqref{OseenBRbdmEAFE}. The assembling of  stiffness matrices for linearized $b_h(\bullet,\bullet;P_h\bm{u}^l_h)$ could be easily done using \eqref{localbh}.

\subsection{Time-dependent problems}
In this subsection, we discuss the application of the stabilized  $P_1\times P_0$ EAFE method to the original evolutionary Navier--Stokes equation \eqref{NSE}. Applying the backward Euler method with time step-size  $\tau>0$ to discretize $\bm{u}_t$ and \eqref{NSBRbdmEAFE} to spatial variables, we obtain the fully discrete scheme
\begin{equation}\label{TD0NSBRbdmEAFE}
  \begin{aligned}
  &(\bm{u}_{h,n},\bm{v}_h)+\tau\nu a_h(\bm{u}_{h,n},\bm{v}_h) + \tau b_h(\bm{u}^l_{h,n},\bm{v}_h;P_h\bm{u}_{h,n}^l)\\
  &\quad- \tau(\nabla\cdot\bm{v}_h,p_{h,n}) = \tau(\bm{f}(t_n),\Pi_h\bm{v}_h)+(\bm{u}_{h,n-1},\bm{v}_h),\\
  &-\tau(\nabla\cdot\bm{u}^n_h,q_h) = 0,
 \end{aligned}
\end{equation}
for all $(\bm{v}_h,q_h)\in \bm{V}_h\times Q_h$,
where $t_n=n\tau$, $n=0,1,2,\ldots$, and $\bm{u}_{h,n}\approx\bm{u}(t_n)$, $p_{h,n}\approx p(t_n)$. However, the resulting algebraic system cannot be solved using the bubble reduction technique described in Section \ref{secOseen}. The reason is that the term $(\bm{u}_{h,n}^b,\bm{v}_h^b)$ introduces an extra mass matrix and the block  $\mathbf{A}_{bb}$ in \eqref{matrix} is no longer diagonal. To deal with such an issue, we consider the following quadrature on an element $T\in\mathcal{T}_h$:
\begin{equation}\label{quad}
    \int_Tfdx\approx\frac{|T|}{d+1}\sum_{F\in\mathcal{F}, F\subset\partial T}f(\bm{x}_F),
\end{equation}
where $\bm{x}_F$ is the barycenter of the face $F$. The formula \eqref{quad} is second-order in $\mathbb{R}^2$ and first-order in $\mathbb{R}^3$. We then introduce a discrete $L^2$ vector inner product
\begin{equation}
    (\bm{u},\bm{v})_h:=\frac{1}{d+1}\sum_{T\in\mathcal{T}_h}|T|\sum_{F\in\mathcal{F}, F\subset\partial T}\bm{u}(x_F)\cdot \bm{v}(x_F).
\end{equation}
Replacing $(\bm{u}_{h,n},\bm{v}_h), (\bm{u}_{h,n-1},\bm{v}_h)$ in \eqref{TD0NSBRbdmEAFE} with $(\bm{u}_{h,n},\bm{v}_h)_h, (\bm{u}_{h,n-1},\bm{v}_h)_h$, we arrive at the following modified  time-dependent scheme: Find $\{\bm{u}_{h,n}\}_{n\geq1}\subset\bm{V}_h$ and $\{p_{h,n}\}_{n\geq1}\subset Q_h$ such that
\begin{equation}\label{TD1NSBRbdmEAFE}
  \begin{aligned}
  &(\bm{u}_{h,n},\bm{v}_h)_h+\tau\nu a_h(\bm{u}_{h,n},\bm{v}_h) + \tau b_h(\bm{u}^l_{h,n},\bm{v}_h;P_h\bm{u}_{h,n}^l)\\
  &\quad- \tau(\nabla\cdot\bm{v}_h,p_{h,n}) = \tau(\bm{f}(t_n),\Pi_h\bm{v}_h)+(\bm{u}_{h,n-1},\bm{v}_h)_h,&&\quad\forall\bm{v}_h\in \bm{V}_h,\\
  &-\tau(\nabla\cdot\bm{u}_{h,n},q_h) = 0,&&\quad\forall q_h\in Q_h.
 \end{aligned}
\end{equation}
For two distinct faces $F\neq F^\prime\in\mathcal{F}_h,$ it holds that 
\begin{equation}
    (\phi_F\bm{n}_F,\phi_{F^\prime}\bm{n}_{F^\prime})_h=0.
\end{equation}
Therefore the matrix representation for  $(\bm{u}_{h,n}^b,\bm{v}_h^b)_h$ is a diagonal mass matrix and the linearized algebraic system for \eqref{TD1NSBRbdmEAFE} in fixed point iterations is of the form \eqref{matrix}, where $\mathbf{A}_{bb}$ is still a diagonal matrix  corresponding to $(\bm{u}_{h,n}^b,\bm{v}_h^b)_h+\tau\nu a_h(\bm{u}_{h,n}^b,\bm{v}_h^b)$. As explained in \eqref{matrix}, \eqref{condensed} in Section \ref{secOseen}, the computational cost of solving \eqref{TD1NSBRbdmEAFE} is comparable to a $[P_1]^d\times P_0$ element method.

Similarly to the stationary case, to enhance pressure-robustness, the work \cite{LinkeMerdon2016} replaces the terms  $(\bm{u}_{h,n},\bm{v}_h)$ and $(\bm{u}_{h,n-1},\bm{v}_h)$ in a backward Euler method with $(\Pi_h\bm{u}_{h,n},\Pi_h\bm{v}_h)$ and $(\Pi_h\bm{u}_{h,n-1},\Pi_h\bm{v}_h)$, respectively. For our purpose, we can only postprocess the test function $\bm{v}_h$ as the term $(\Pi_h\bm{u}_{h,n}^b,\Pi_h\bm{v}^b_h)$ would contribute a non-diagonal mass matrix. The resulting  time-dependent scheme is as follows: Find $\{\bm{u}_h^n\}_{n\geq1}\subset\bm{V}_h$ and $\{p_h^n\}_{n\geq1}\subset Q_h$ such that
\begin{equation}\label{TD2NSBRbdmEAFE}
  \begin{aligned}
  &(\bm{u}_{h,n},\Pi_h\bm{v}_h)_h+\tau\nu a_h(\bm{u}_{h,n},\bm{v}_h) + \tau b_h(\bm{u}^l_{h,n},\bm{v}_h;P_h\bm{u}^l_{h,n})\\
  &\quad- \tau(\nabla\cdot\bm{v}_h,p_{h,n}) = \tau(\bm{f}(t_n),\Pi_h\bm{v}_h)+(\bm{u}_{h,n-1},\Pi_h\bm{v}_h)_h,&&\quad\forall\bm{v}_h\in\bm{V}_h,\\
  &-\tau(\nabla\cdot\bm{u}_{h,n},q_h) = 0,&&\quad\forall q_h\in Q_h.
 \end{aligned}
\end{equation}
It follows from
direct calculation and \eqref{piphiene} that
\begin{equation}
    (\phi_F\bm{n}_F,\Pi_h(\phi_{F^\prime}\bm{n}_{F^\prime}))_h=c(\phi_F\bm{n}_F,\bm{\phi}^{\rm RT}_{F^\prime})_h=0,\quad \forall F\neq F^\prime\in\mathcal{F}_h,
\end{equation}
where $c=|F|/6$ in $\mathbb{R}^2$ and $c=|F|/15$ in $\mathbb{R}^3$.
Therefore the mass matrix from  $(\bm{u}_{h,n}^b,\Pi_h\bm{v}_h^b)_h$ is diagonal and \eqref{TD2NSBRbdmEAFE} could be solved in the same way as \eqref{TD1NSBRbdmEAFE} and \eqref{NSBRbdmEAFE}.

\begin{remark}
For Navier--Stokes equations with $\nu$ of moderate size, we could replace the EAFE form $b_h(\bm{u}_{h,n}^l,\bm{v}_h;P_h\bm{u}_{h,n}^l)$ with $(\bm{u}_{h,n}\cdot\nabla\bm{u}^l_{h,n},\Pi_h\bm{v}_h)$ and obtain a new stabilized $P_1\times P_0$ finite element method.
\end{remark}

\section{Numerical Experiments}\label{secNE}
In this section, we compare our schemes \eqref{OseenBRbdmEAFE}, \eqref{NSBRbdmEAFE}, \eqref{TD1NSBRbdmEAFE} with the standard BR finite element method using test functions postprocessed by the BDM interpolation $\Pi_h.$ Those postprocessing-based BR methods have been already argued in \cite{LinkeMatthiesTobiska2016,LinkeMerdon2016,JohnLinkeMerdonNeilanRebholz2017} to be superior to the classical ones. All experiments are performed in MATLAB R2020a and the linear solver is the MATLAB operation $\backslash$. For nonlinear problems, we use the fixed point iteration with 50 maximum number of iterations. The stopping criterion for nonlinear iterations is that the relative size of the increment $ |\bm{X}_{k+1}-\bm{X}_k|/|\bm{X}_k|$ is below $10^{-6}$, where $\bm{X}_k$ is the solution for the linear system at the $k$-th iteration step. The parameter $\varepsilon$ used in our EAFE-based schemes is set to be $10^{-10}$. 

\subsection{Convection-dominated Oseen problem}\label{expOseen}
First, we test the performance of \eqref{OseenBRbdmEAFE} and the classical scheme (cf.~\cite{LinkeMerdon2016,JohnLinkeMerdonNeilanRebholz2017})
\begin{equation}\label{BRbdm}
  \begin{aligned}
  \nu (\nabla\bm{u}_h,\nabla\bm{v}_h) + (\bm{b}\cdot\nabla\bm{u}_h,\Pi_h\bm{v}_h)- (\nabla\cdot\bm{v}_h,p_h) &= (\bm{f},\Pi_h\bm{v}_h),&&\quad\forall \bm{v}_h\in\bm{V}_h,\\
  (\nabla\cdot\bm{u}_h,q_h) &= 0,&&\quad\forall q_h\in Q_h
 \end{aligned}
\end{equation}
using the linear Oseen problem \eqref{Oseen} with $\nu=10^{-3},$ $\bm{f}=10(-x_2,x_1)^\top$, $\bm{g}=(0,0)^\top$,  $\bm{b}=(10,1)^\top$ on the unit square $\Omega=[0,1]\times[0,1]$. The mesh of $\Omega$ is relatively coarse and is shown in Fig.~\ref{Oseenexact}(a). We use the numerical velocity solution of  \eqref{OseenBRbdmEAFE} on a  relatively fine uniform mesh with $51200$ triangles as the exact velocity field, see Fig.~\ref{Oseenexact}(b).

\begin{figure}[ptb]
\centering
\begin{tabular}[c]{cccc}%
  \subfigure[The mesh used in Problem 5.1]{\includegraphics[width=6cm,height=5.5cm]{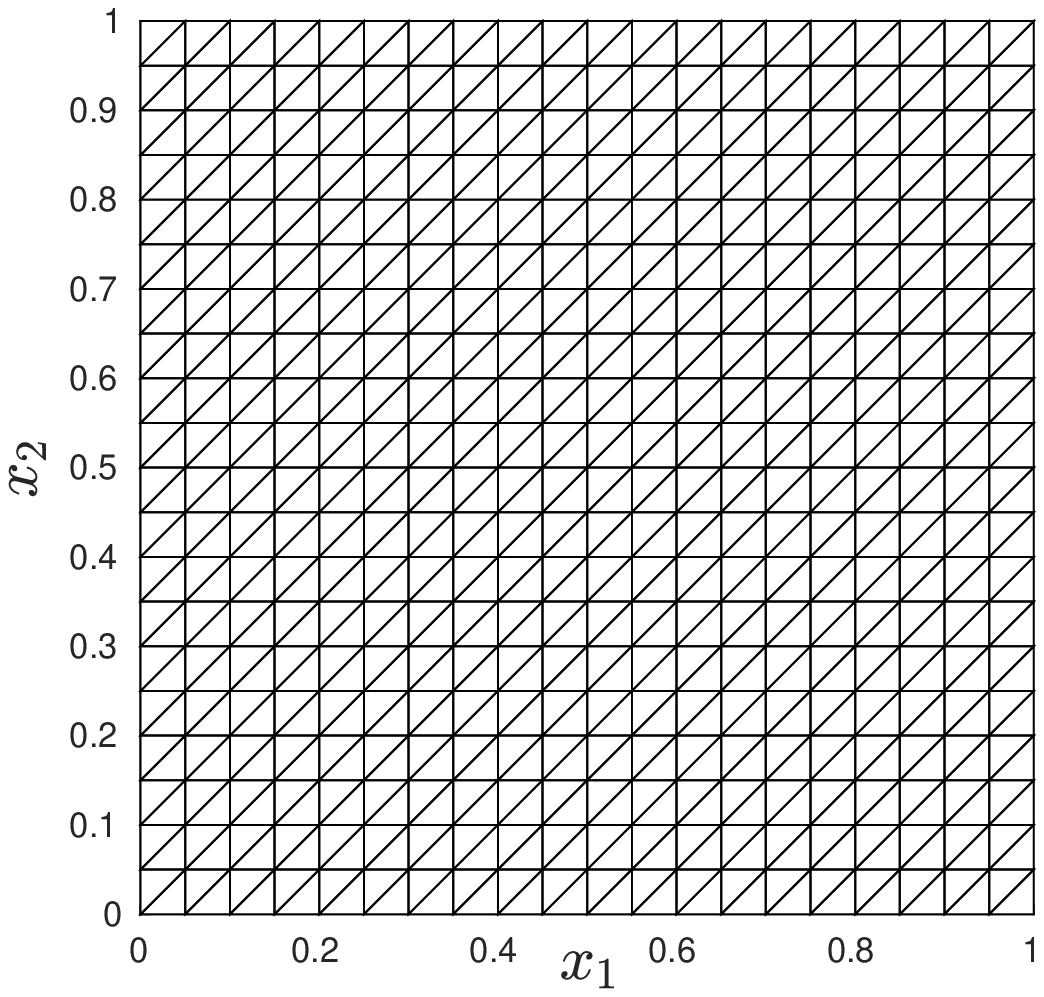}}\label{Oseenmesh}
  \subfigure[The exact velocity field in Problem 5.1]{\includegraphics[width=6cm,height=5.5cm]{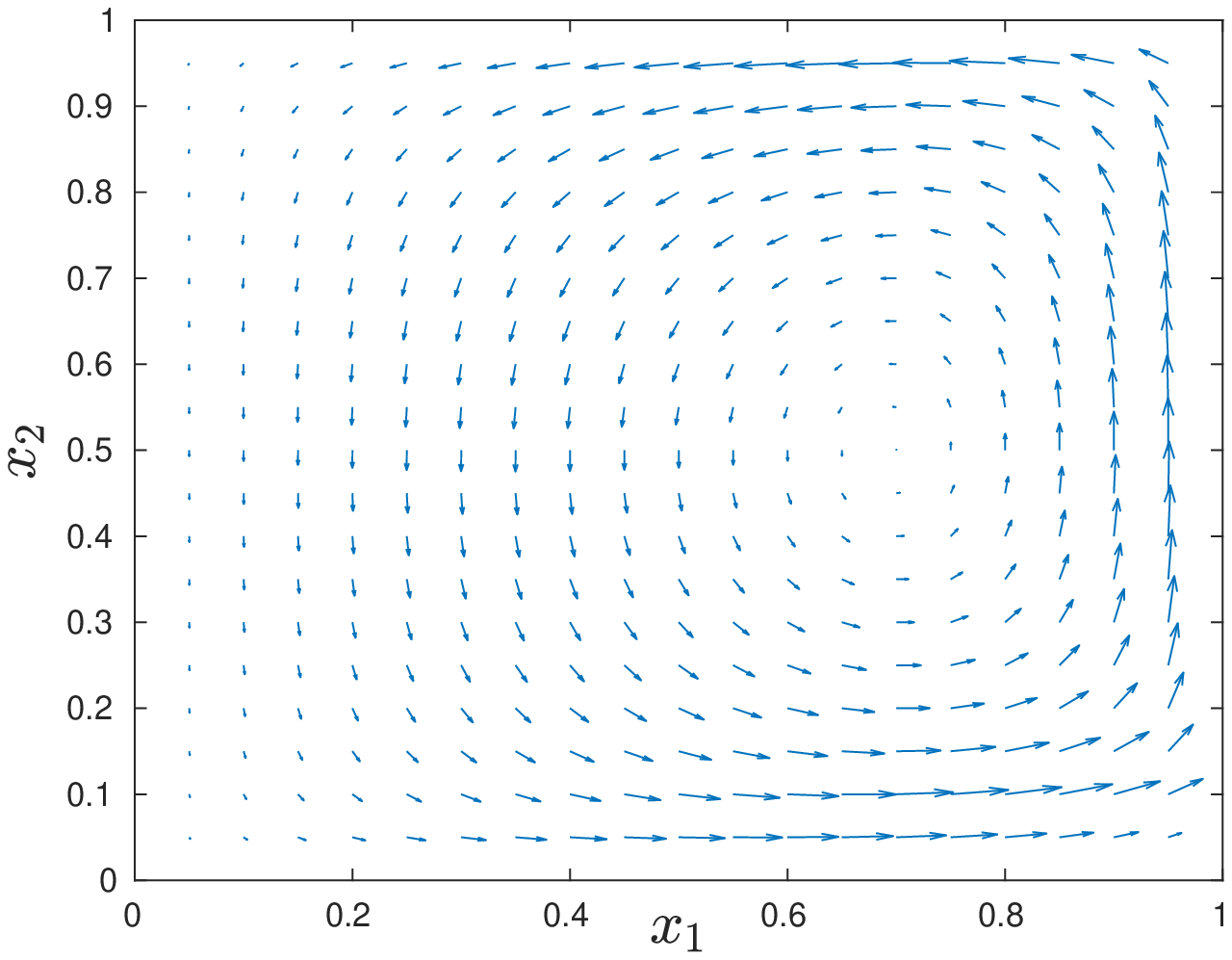}}
\end{tabular}
\caption{The grid and exact velocity field  for \eqref{Oseen} used in Problem \ref{expOseen}.}
\label{Oseenexact}
\end{figure}

\begin{figure}[ptb]
\centering
\begin{tabular}[c]{cccc}%
  \subfigure[Velocity field by the scheme \eqref{OseenBRbdmEAFE}]{\includegraphics[width=6cm,height=5.5cm]{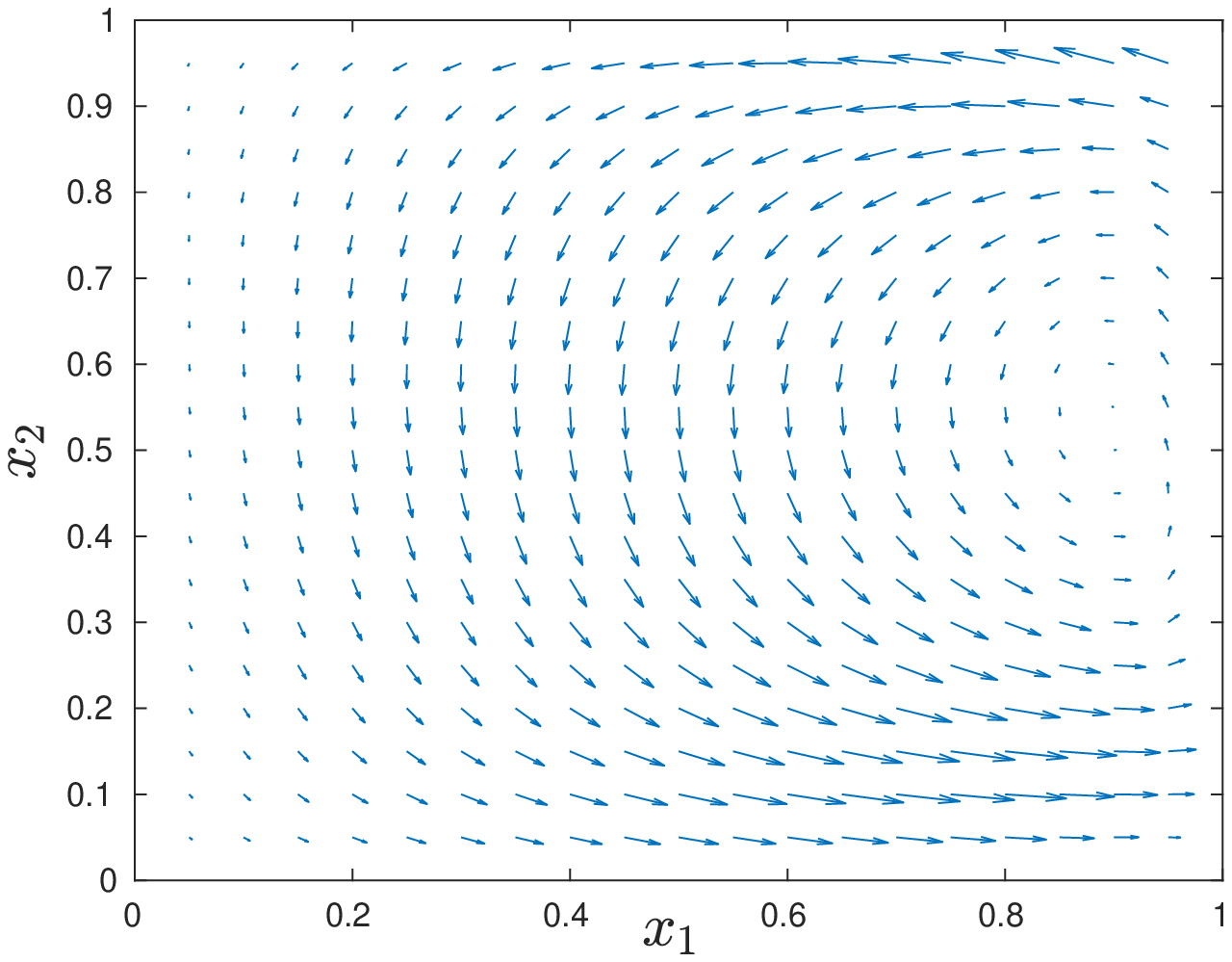}}
  \subfigure[Velocity field by the scheme \eqref{BRbdm}]{\includegraphics[width=6cm,height=5.5cm]{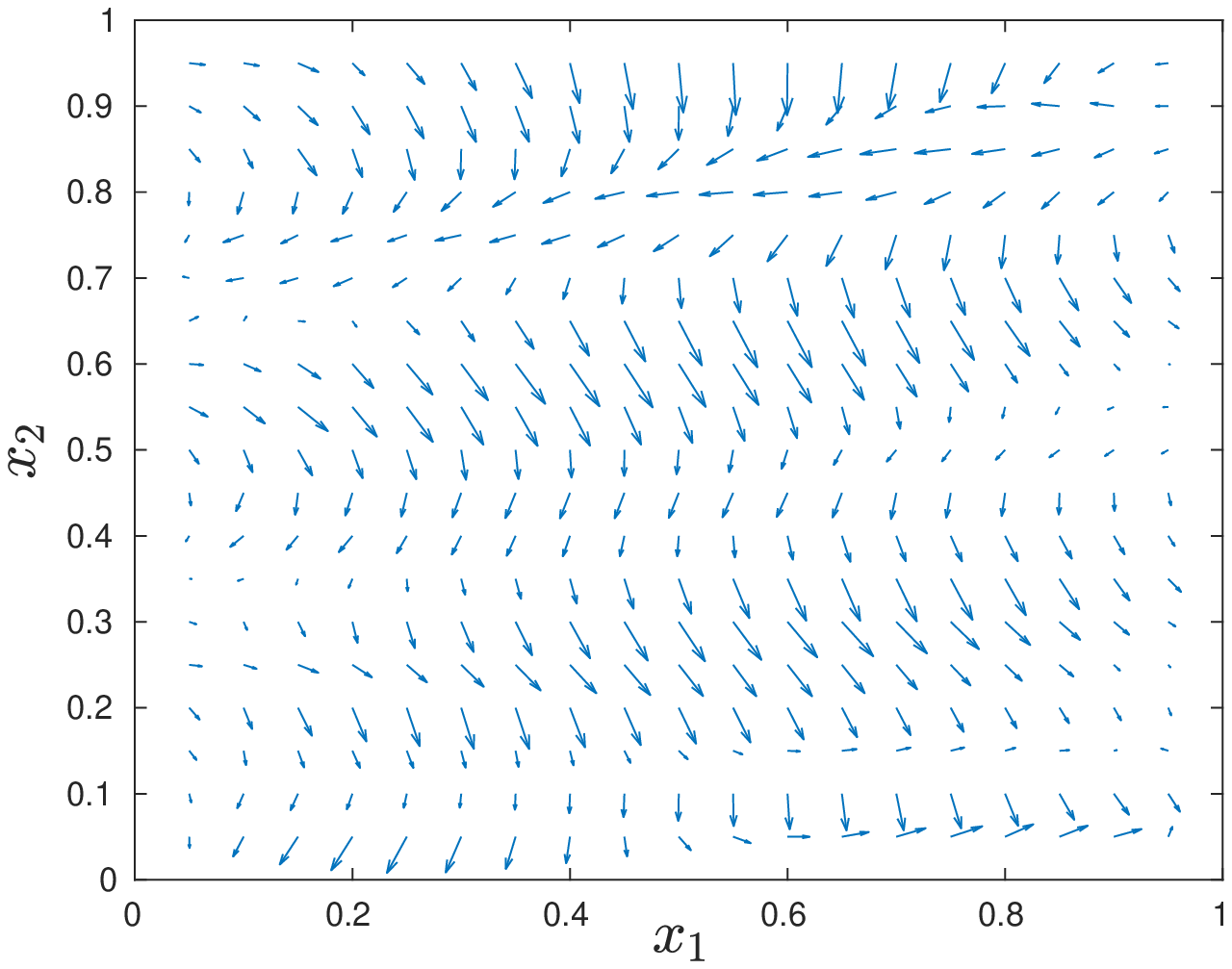}}
\end{tabular}
\caption{Numerical velocity fields of the schemes \eqref{OseenBRbdmEAFE} and \eqref{BRbdm} in Problem \ref{expOseen}}
\label{Oseenvelocity}
\end{figure}

Due to the sharp contrast between $\nu\ll|\bm{b}|$ and the homogeneous Dirichlet boundary condition, the exact velocity $\bm{u}$ is expected to have a sharp boundary layer. It could be observed from Fig.~\ref{Oseenvelocity}(b) that the classical scheme \eqref{BRbdm} yields a numerical velocity field with spurious oscillations in the low resolution setting. On the contrary, the stabilized EAFE scheme \eqref{OseenBRbdmEAFE} is able to produce a physically meaningful solution Fig.~\ref{Oseenvelocity}(a) on a coarse mesh, compared with the reference velocity profile in Fig.~\ref{Oseenexact}(b).

\subsection{Kovasznay flow}\label{expK}
\begin{table}[tbhp]
\caption{Convergence history of the stabilized $P_1\times P_0$ EAFE scheme \eqref{NSBRbdmEAFE} in Problem \ref{expK}}
\centering
\begin{tabular}{|c|c|c|c|c|c|c|}
\hline
${\rm error}\backslash \#{\rm dof}$ & 225
 &961
&  3969 & 16129 & 65025 &$\nu$\\ \hline
\hline
            
             $\|\bm{u}-\bm{u}^l_h\|$       &2.142	&5.715e-1	&1.448e-1 &3.712e-2&9.976e-3&$1$\\ \hline
             $\|p-p_h\|$       &3.105e+1	&1.995e+1	&1.070e+1
  &5.465&2.746&$1$\\ \hline
  $\|\bm{u}-\bm{u}^l_h\|$       &3.557e-1	&1.370e-1	&4.039e-2 &1.015e-2&3.309e-3&$10^{-3}$\\ \hline
             $\|p-p_h\|$       &2.151e-2	&1.840e-2	&1.048e-2
  &3.613e-3&1.567e-3&$10^{-3}$\\ \hline
    $\|\bm{u}-\bm{u}^l_h\|$       &3.560e-1	&1.481e-1	&4.496e-2 &9.396e-3&2.516e-3&$5\times10^{-4}$\\ \hline
             $\|p-p_h\|$       &1.317e-2	&1.793e-2 	&1.041e-2
  &2.676e-3&8.949e-4&$5\times10^{-4}$\\ \hline
      $\|\bm{u}-\bm{u}^l_h\|$       &3.589e-1	&1.701e-1	&9.665e-2 &1.033e-2&2.180e-3&$10^{-4}$\\ \hline
             $\|p-p_h\|$       &1.258e-2	&1.897e-2 	&1.905e-2
  &1.842e-3&4.462e-4&$10^{-4}$\\ \hline
\hline
\end{tabular}
\label{KvovasznayBRbdmEAFE}
\end{table}

\begin{table}[tbhp]
\caption{Convergence history of the classical scheme \eqref{NSBRbdm} in Problem \ref{expK}}
\centering
\begin{tabular}{|c|c|c|c|c|c|c|}
\hline
${\rm error}\backslash \#{\rm dof}$ & 401
 &1697
&  6977 & 28289 & 113921 &$\nu$\\ \hline
\hline
             $\|\bm{u}-\bm{u}^l_h\|$       &1.922	&4.436e-1	&1.058e-1 &2.584e-2&6.381e-3&$1$\\ \hline
             $\|p-p_h\|$       &2.886e+1	&1.606e+1	&8.293
  &4.183&2.095&$1$\\ \hline
  $\|\bm{u}-\bm{u}^l_h\|$       &1.057e+1	&2.006e+1	&4.569e-2 &7.543e-3&1.668e-3&$10^{-3}$\\ \hline
             $\|p-p_h\|$       &4.178	&3.125e+1	&1.242e-2
  &1.868e-3&4.786e-4&$10^{-3}$\\ \hline
    $\|\bm{u}-\bm{u}^l_h\|$       &1.002e+1	&9.190	&9.815 &9.131e-3&1.824e-3&$5\times 10^{-4}$\\ \hline
             $\|p-p_h\|$       &8.932	&8.246e+1	&5.006
  &2.277e-3&4.392e-4&$5\times 10^{-4}$\\ \hline
  $\|\bm{u}-\bm{u}^l_h\|$       &5.270	&1.126e+1	&1.651e+1 &1.871e+1&5.078&$10^{-4}$\\ \hline
             $\|p-p_h\|$       &9.940	&8.284	&1.970e+1
  &4.239e+1&3.174&$10^{-4}$\\ \hline
\hline
\end{tabular}
\label{KvovasznayBRbdm}
\end{table}

For the stationary Navier--Stokes  problem \eqref{SNSE},  we compare our scheme \eqref{OseenBRbdmEAFE} with the following classical scheme (cf.~\cite{LinkeMerdon2016,JohnLinkeMerdonNeilanRebholz2017})
\begin{equation}\label{NSBRbdm}
  \begin{aligned}
  \nu (\nabla\bm{u}_h,\nabla\bm{v}_h) + (\bm{u}_h\cdot\nabla\bm{u}_h,\Pi_h\bm{v}_h)- (\nabla\cdot\bm{v}_h,p_h) &= (\bm{f},\Pi_h\bm{v}_h),&&\quad\forall \bm{v}_h\in\bm{V}_h,\\
  (\nabla\cdot\bm{u}_h,q_h) &= 0,&&\quad\forall q_h\in Q_h
 \end{aligned}
\end{equation}
on the domain $\Omega=[-0.5,1.5]\times[0,2]$. The exact solution solutions of \eqref{SNSE} are taken to be
\begin{equation}\label{exactup}
    \bm{u}(\bm{x})=\begin{pmatrix}1-e^{\lambda x_1}\cos(2\pi x_2)\\\frac{\lambda}{2\pi}e^{\lambda x_1}\sin(2\pi x_2)\end{pmatrix},\quad p(\bm{x})=-\frac{1}{2}e^{2\lambda x_1}+\frac{1}{8\lambda}(e^{3\lambda}-e^{-\lambda}),
\end{equation}
where $\lambda=\frac{1}{2\nu}-\sqrt{\frac{1}{4\nu^2}+4\pi^2}$ and $\nu$ is varying.  In the literature, \eqref{exactup} is a benchmark problem known as the Kovasznay flow (cf.~\cite{DiPietroErn2012,ChenLiCorinaCimbala2020,ChenLi2021}). We start with a $8\times8$ uniform initial partition of $\Omega$ having 128 triangles and then refine each element in the current mesh by quad-refinement to obtain finer grids. The data shown from 2nd to 6th columns in Tables \ref{KvovasznayBRbdmEAFE} and \ref{KvovasznayBRbdm} are computed on the same mesh. Since the bubble component of $\bm{u}_h$ has little effect on the accuracy, we only consider the approximation property of the linear part $\bm{u}_h^l$.

Without dofs associated with faces, the size of algebraic systems from \eqref{NSBRbdmEAFE} is significantly smaller than \eqref{NSBRbdm}. 
In the case that $\nu=1$, the numerical accuracy of the classical scheme \eqref{NSBRbdm} is slightly better than the EAFE scheme \eqref{NSBRbdmEAFE}. As $\nu$ is increasingly small, our EAFE-stabilized $P_1\times P_0$ method is able to yield  numerical solutions with moderate accuracy even on the coarsest mesh. On the other hand, the performance of the classical method is not satisfactory on coarse meshes. In fact, the nonlinear iteration for \eqref{NSBRbdm} is not convergent unless the grid resolution is high enough.

\subsection{Evolutionary potential flow}\label{exp2dpotential}
In the rest of two experiments, we investigate the effectiveness of \eqref{NSBRbdmEAFE} and \eqref{TD1NSBRbdmEAFE} applied to benchmark potential flows  proposed by Linke\&Merdon \cite{LinkeMerdon2016}. Exact velocities of those problems are gradient of a harmonic polynomial. For potential flows, the pressure is relatively complicated and causes large velocity errors for numerical methods without pressure-robustness. Let $\chi(\bm{x},t)=t^2( 5x_1^4x_2-10x_1^2x_2^3+x_2^5 )$ be a polynomial that is  harmonic in space. We consider the evolutionary problem \eqref{NSE} with exact solutions
\begin{equation*}
    \bm{u}=\nabla\chi,\quad p=-\frac{|\bm{u}|^2}{2}-\chi_t+C,
\end{equation*}
where $C$ is a constant such that $\int_\Omega pdx=0.$ The corresponding load $\bm{f}=\bm{0}.$ The space domain $\Omega=[-0.5,0.5]\times[-0.5,0.5]$ and the time interval is $[0,2]$. We compare the EAFE $P_1\times P_0$ method \eqref{TD1NSBRbdmEAFE} with the following classical scheme (cf.~\cite{LinkeMerdon2016})
\begin{equation}\label{TDNSBRbdm}
  \begin{aligned}
  &(\Pi_h\bm{u}_{h,n},\Pi_h\bm{v}_h)+\tau\nu (\nabla\bm{u}_{h,n},\nabla\bm{v}_h) + \tau b_h(\bm{u}_{h,n}\cdot\nabla\bm{u}_{h,n},\Pi_h\bm{v}_{h})\\
  &\quad- \tau(\nabla\cdot\bm{v}_h,p_{h,n}) = \tau(\bm{f}(t_n),\Pi_h\bm{v}_h)+(\Pi_h\bm{u}_{h,n-1},\Pi_h\bm{v}_h),\quad&&\bm{v}_h\in \bm{V}_h,\\
  &-\tau(\nabla\cdot\bm{u}_{h,n},q_h) = 0,\quad &&q_h\in Q_h.
 \end{aligned}
\end{equation}
We set the time step-size to be $\tau=0.1$ and use a uniform criss-cross mesh  with 2048 right triangles. 

For the potential flow with $\nu=1$, the numerical performance of the classical scheme \eqref{TDNSBRbdm} and the EAFE scheme \eqref{TD1NSBRbdmEAFE} are comparable, see Table  \ref{2dpotentialtab1}. When $\nu=10^{-6}$ is exceedingly small, it is observed from Table \ref{2dpotentialtab2} that our stabilized $P_1\times P_0$ EAFE method outperforms the classical one. In particular, \eqref{TDNSBRbdm} stops converging after $t=1.5$ while our scheme \eqref{TD1NSBRbdmEAFE} maintains moderate accuracy and outputs a relatively good solution at $t=2$, see Fig.~\ref{2dpotentialvis}. 

% \begin{figure}[!hptb]
% \centering
% \includegraphics[width=6cm,height=5cm]{TDmesh2d.eps}
% \caption{The mesh in Problem \ref{exp2dpotential}}
% \label{2dpotentialmesh}
% \end{figure}

\begin{figure}[!hptb]
\centering
\begin{tabular}[c]{cccc}%
  \subfigure[Speed profile at $t=2$ by the scheme \eqref{TD1NSBRbdmEAFE}]{\includegraphics[width=6cm,height=5.5cm]{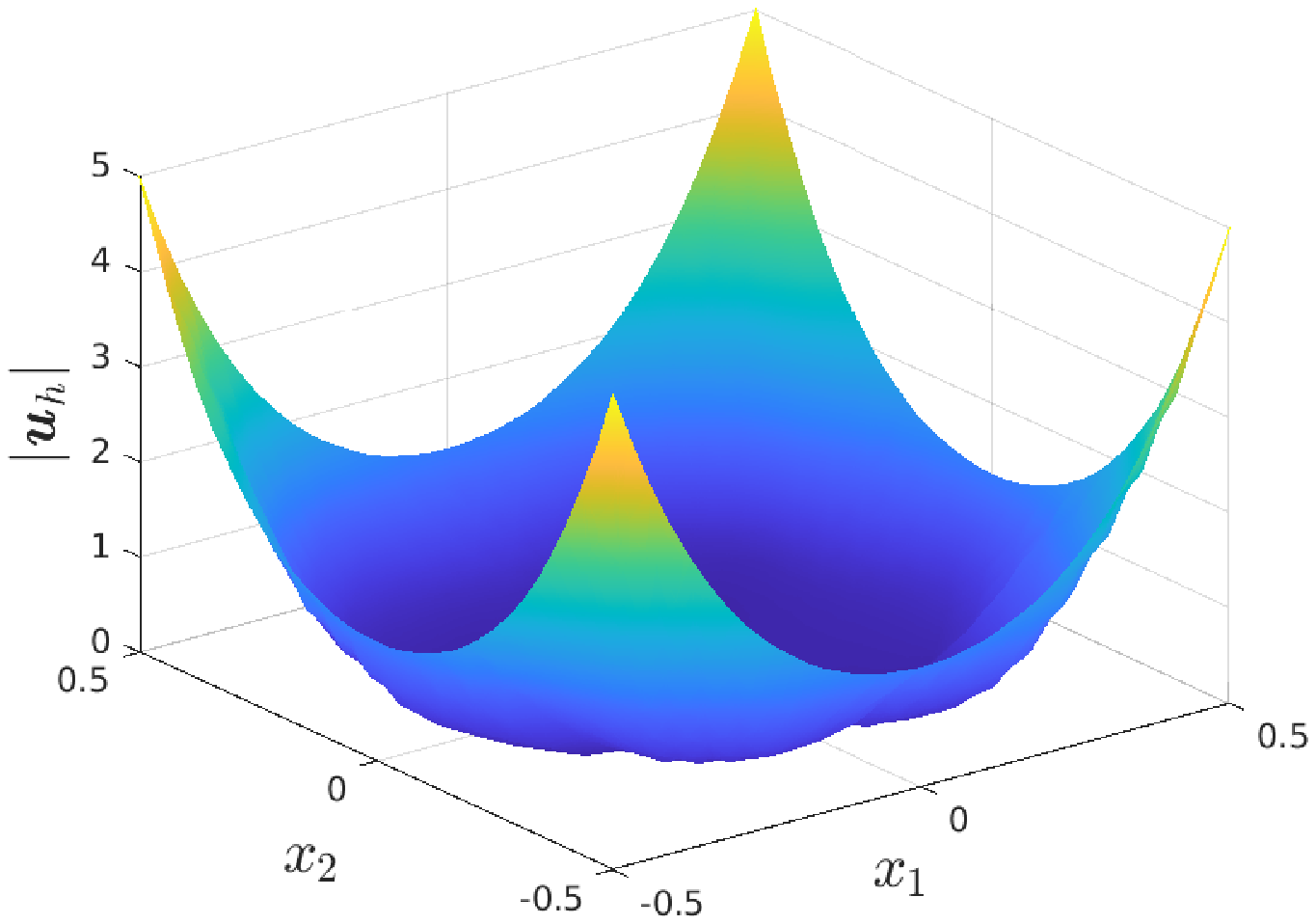}}
  \subfigure[Exact speed profile at $t=2$]{\includegraphics[width=6cm,height=5.5cm]{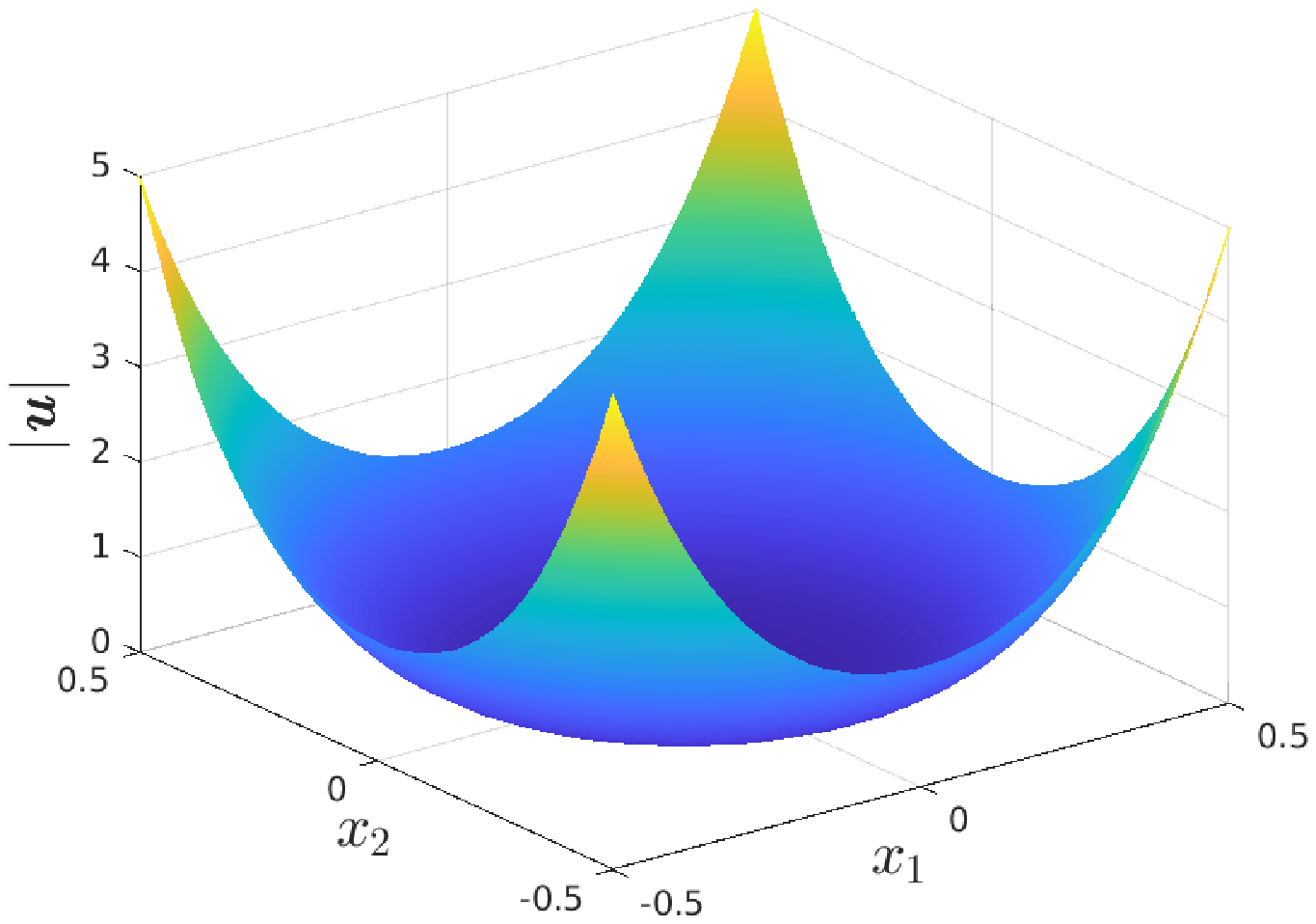}}
\end{tabular}
\caption{Numerical and exact speed profiles in Problem \ref{exp2dpotential}, $t=2,$ $\nu=10^{-6}$}
\label{2dpotentialvis}
\end{figure}

\begin{table}[!tbhp]
\caption{A comparison between the schemes \eqref{TD1NSBRbdmEAFE} and \eqref{TDNSBRbdm} in Problem \ref{exp2dpotential}, $\nu=1$.}
\centering
\begin{tabular}{|c|c|c|c|c|}
\hline
$t$ & $\|\bm{u}(t_n)-\bm{u}^l_{h,n}\|$ in \eqref{TD1NSBRbdmEAFE}
 &$\|p(t_n)-p_{h,n}\|$ in \eqref{TD1NSBRbdmEAFE}
&  $\|\bm{u}(t_n)-\bm{u}^l_{h,n}\|$ in \eqref{TDNSBRbdm} & $\|p(t_n)-p_{h,n}\|$ in \eqref{TDNSBRbdm}\\ \hline
\hline
             $0.5$       &4.640e-4	&1.371e-2	&3.890e-4 &2.220e-2\\ \hline
  $1.0$       &1.855e-3	&5.453e-2	&1.556e-3 &8.867e-2\\ \hline
             $1.5$       &4.164e-3	&1.323e-1	&3.501e-3
  &2.035e-1\\ \hline
    $2.0$       &7.387e-3	&2.838e-1	&6.224e-3 &3.805e-1 \\ \hline
\hline
\end{tabular}
\label{2dpotentialtab1}
\end{table}

\begin{table}[!tbhp]
\caption{A comparison between the schemes \eqref{TD1NSBRbdmEAFE} and \eqref{TDNSBRbdm} in Problem \ref{exp2dpotential}, $\nu=10^{-6}$.}
\centering
\begin{tabular}{|c|c|c|c|c|}
\hline
$t$ & $\|\bm{u}(t_n)-\bm{u}^l_{h,n}\|$ in \eqref{TD1NSBRbdmEAFE}
 &$\|p(t_n)-p_{h,n}\|$ in \eqref{TD1NSBRbdmEAFE}
&  $\|\bm{u}(t_n)-\bm{u}^l_{h,n}\|$ in \eqref{TDNSBRbdm} & $\|p(t_n)-p_{h,n}\|$ in \eqref{TDNSBRbdm}\\ \hline
\hline
             $0.5$       &2.893e-2	&1.401e-2	&1.962e-3 &3.193e-3\\ \hline
  $1.0$       &8.857e-2	&4.862e-2	&3.548e-2 &1.036e-2 \\ \hline
             $1.5$       &1.420e-1	&1.258e-1	&2.014e+1
  &3.372e+2\\ \hline
    $2.0$       &1.946e-1	&2.211e-1 	&1.085e+2&2.852e+3 \\ \hline
\hline
\end{tabular}
\label{2dpotentialtab2}
\end{table}

\subsection{3d potential flow}\label{exp3dpotential}

\begin{figure}[ptb]
\centering
\begin{tabular}[c]{cccc}%
  \subfigure[The tetrahedral mesh in Problem 5.4]{\includegraphics[width=6cm,height=5.5cm]{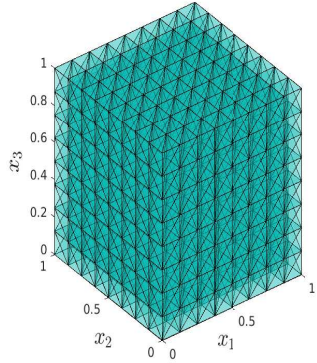}}
  \subfigure[Exact speed profile in Problem 5.4]{\includegraphics[width=6cm,height=5.5cm]{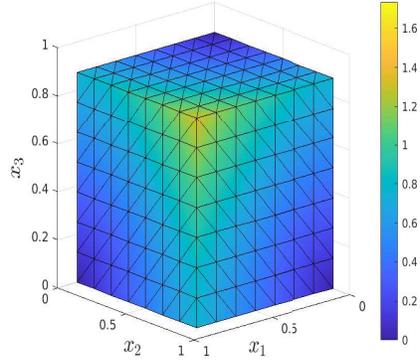}}
\end{tabular}
\caption{The mesh and exact speed in Problem \ref{exp3dpotential}}
\label{3dpotentialexact}
\end{figure}

\begin{figure}[ptb]
\centering
\begin{tabular}[c]{cccc}%
  \subfigure[Speed profile by the scheme \eqref{NSBRbdmEAFE}]{\includegraphics[width=6cm,height=5.5cm]{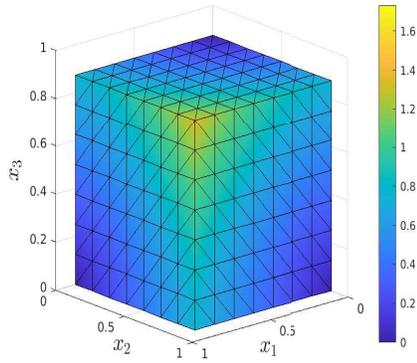}}
  \subfigure[Speed profile by the scheme \eqref{NSBRbdm}]{\includegraphics[width=6cm,height=5.5cm]{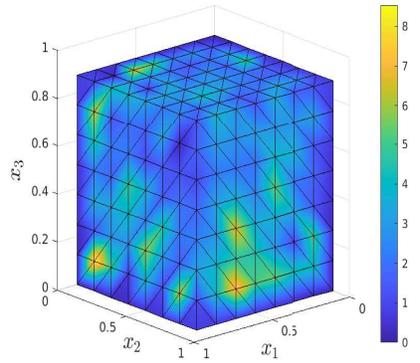}}
\end{tabular}
\caption{Speed profiles of the schemes \eqref{NSBRbdmEAFE} and \eqref{NSBRbdm} when $\nu=10^{-4}$ in Problem \ref{exp3dpotential}}
\label{3dpotentialvis}
\end{figure}

\begin{table}[tbhp]
\caption{A comparison between the schemes \eqref{NSBRbdmEAFE} and \eqref{NSBRbdm} in Problem \ref{exp3dpotential}}
\centering
\begin{tabular}{|c|c|c|c|c|}
\hline
$\nu$ & $\|\bm{u}-\bm{u}^l_h\|$ in \eqref{NSBRbdmEAFE}
 &$\|p-p_h\|$ in \eqref{NSBRbdmEAFE}
&  $\|\bm{u}-\bm{u}^l_h\|$ in \eqref{NSBRbdm} & $\|p-p_h\|$ in \eqref{NSBRbdm}\\ \hline
\hline
$1$       &2.205e-3	&4.393e-2	&2.201e-3 &4.037e-2\\ \hline
             $10^{-1}$       &2.234e-3	&2.925e-2	&2.220e-3 &2.813e-2\\ \hline
  $10^{-2}$       &2.605e-3	&2.872e-2	&2.192e-3 &2.799e-2\\ \hline
             $10^{-3}$       &4.639e-3	&2.837e-2	&2.192e-3
  &2.800e-2\\ \hline
    $10^{-4}$       &1.156e-2	&2.804e-2	&1.871 &1.074e+1\\ \hline
             $10^{-5}$       &1.607e-2	&2.806e-2	&1.064e+1
  &2.454e+3\\ \hline
\hline
\end{tabular}
\label{3dpotentialtab}
\end{table}

The last experiment is devoted to a 3 dimensional stationary Navier--Stokes problem \eqref{SNSE} on the unit cube $\Omega=[0,1]\times[0,1]\times[0,1]$, where the exact solution is a steady-state potential flow 
\begin{equation}
    \bm{u}(\bm{x})=\nabla(x_1x_2x_3),\quad p=-\frac{1}{2}\nabla|\bm{u}|^2+C,
\end{equation}
and $C$ is a constant such that $\int_\Omega pdx=0.$ The corresponding load  $\bm{f}=\bm{0}.$
The mesh of $\Omega$ is a uniform tetrahedral grid with 3072 elements, see Fig.~\ref{3dpotentialexact}(a). Numerical results are presented in Fig.~\ref{3dpotentialvis} and Table \ref{3dpotentialtab}.

For viscosity $\nu$ of moderate size, the performance of \eqref{NSBRbdmEAFE} and \eqref{NSBRbdm} are similar while \eqref{NSBRbdmEAFE} solves more economic algebraic linear systems with much less number of dofs. When $\nu\leq10^{-4}$, it is observed from Table \ref{3dpotentialtab} that the EAFE scheme \eqref{NSBRbdmEAFE} produces velocities and pressures of good quality on the fixed mesh while the fixed point iteration of \eqref{NSBRbdm} is indeed not convergent. Compared with \eqref{NSBRbdmEAFE}, the classical scheme \eqref{NSBRbdm} produces highly oscillating  solutions in the fixed point iteration when $\nu\leq10^{-4}$, see the visualization of $|\bm{u}_h|$ at the cross sections $x_1=0.8$, $x_2=0.8$, $x_3=0.8$ in Fig.~\ref{3dpotentialvis}.

\section{Concluding remarks}\label{seccon}
We have developed an EAFE-stabilized $P_1\times P_0$ finite element method for incompressible Navier--Stokes equations with small viscosity. For the Stokes problem, we have shown the robust a priori error analysis of our scheme with respect to $\nu$. It is straightforward to apply the technique in this paper to other Stokes element of the form $(P_1+{\it bubble})\times P_0$, see, e.g., the pointwise divergence-free Stokes elements in \cite{GuzmanNeilan2014,GuzmanNeilan2014b}. Moreover, we shall investigate stabilized $P_k\times P_{k-1}$ schemes based on reducing higher order $(P_k+{\it bubble})\times P^{\rm disc}_{k-1}$ Stokes elements \cite{GiraultRaviart1986,LinkeMatthiesTobiska2016,GuzmanNeilan2014,GuzmanNeilan2014b} or other technique \cite{DouglasWang1989} and higher order EAFE \cite{BankVassiZikatanov2017,2020WuZikatanov-a} in future research.

\providecommand{\bysame}{\leavevmode\hbox to3em{\hrulefill}\thinspace}
\providecommand{\MR}{\relax\ifhmode\unskip\space\fi MR }
% \MRhref is called by the amsart/book/proc definition of \MR.
\providecommand{\MRhref}[2]{%
  \href{http://www.ams.org/mathscinet-getitem?mr=#1}{#2}
}
\providecommand{\href}[2]{#2}

\end{document}